\documentclass[11pt,a4paper,twoside]{amsart}

\usepackage[dvipsnames,svgnames,x11names,hyperref]{xcolor}
\usepackage[T1]{fontenc}
\usepackage[english]{babel}
\usepackage{pifont,fancybox,pstricks,pst-grad,pst-node}
\usepackage{amssymb,amscd,amstext,latexsym,array,verbatim,txfonts,url,graphicx,array,colortbl}
\usepackage{mathtools,microtype}
\usepackage{tikz,tikz-cd}
\usetikzlibrary{matrix,calc,positioning,arrows,decorations.pathreplacing,patterns,arrows}
\usepackage[pagebackref,colorlinks,citecolor=blue,linkcolor=blue,urlcolor=blue,filecolor=blue]{hyperref}
\usepackage{booktabs}
\renewcommand{\arraystretch}{1.2}

\usepackage{caption}
\captionsetup[table]{position=bottom}

\renewcommand{\le}{\leqslant}
\renewcommand{\ge}{\geqslant}

\theoremstyle{plain}
\newtheorem{thm}{\bfseries Theorem}[section]
\newtheorem{conj}{\bfseries Conjecture}[section]
\newtheorem{lem}[thm]{\bfseries Lemma}
\newtheorem{prop}[thm]{\bfseries Proposition}
\newtheorem{cor}[thm]{\bfseries Corollary}
\theoremstyle{definiton}
\newtheorem{defi}[thm]{\bfseries Definition}
\theoremstyle{remark}
\newtheorem{example}[thm]{\bfseries Example}

\newtheorem{nota}[thm]{\bfseries Notation}

\newtheorem{rem}[thm]{\bfseries Remark}

\newcommand{\iso}{\cong}
\DeclareMathSymbol{\Z}{\mathalpha}{AMSb}{"5A} 
\DeclareMathSymbol{\PP}{\mathalpha}{AMSb}{"50} % the letter Bbb P for projective
\DeclareMathSymbol{\Q}{\mathalpha}{AMSb}{"51}
\DeclareMathSymbol{\N}{\mathalpha}{AMSb}{"4E}
\DeclareMathSymbol{\R}{\mathalpha}{AMSb}{"52}

\newcommand{\D}{\mathbb{D}}
\DeclareMathOperator{\Min}{Min}
\newcommand{\ZZ}{\ensuremath{\mathbb{Z}}}
\newcommand{\RR}{\ensuremath{\mathbb{R}}}

\renewcommand{\le}{\leqslant}
\renewcommand{\ge}{\geqslant}

\newcommand{\la}{\lambda}
\newcommand{\La}{\Lambda}

\newcommand{\ga}{\gamma}

 %\mathfrak{p}}
 %{\mathfrak{p}_\La}}

\renewcommand{\leq}{\leqslant}
\renewcommand{\geq}{\geqslant}
\makeatletter

\makeatother
\definecolor{mongris}{gray}{0.9}

\renewcommand{\bf}{\bfseries}
\renewcommand{\it}{\itshape}

 % the cube of dim n over k
 
 % the Quillen K with weights

 % the affine space over k

 % a convenient notation
\newcommand{\U}[1]{#1 ^{\times}} % The invertible elements of a ring
 % Milnor K
 % higher Chow group \ch{a}{b}{c}=CH^a(c,b)
 % HCG with coefficient Z/#4
 % homology
 % homology with \Z coefficients
 % Zariski Sheaf of Quillen K-theory
 % the usual Zariski Sheaf of Milnor K-theory
 % the Voevodsky sheaf of Milnor K-theory
 % The Rost sheaf of Milnor K-theory
 % Zariski sheaf of HCG
 % The connecting morphism for localization seq for HCG

\newcommand{\Vor}{{\mathrm{Vor}}}

\def \Aut{{\rm Aut}}
\newcommand{\GL}{\mathit{GL}}
\newcommand{\SL}{\mathit{SL}}

\definecolor{mongris}{rgb}{0.9, 0.9, .9}

\usepackage[all]{xy}
\renewcommand{\geq}{\geqslant}
\renewcommand{\leq}{\leqslant}

%%%%%%%%%%%%%%%%% NEW MACROS
 %% the Alternating group
 %% the Symmetric group
 %% \Z with the action on the cell \sigma
\newcommand{\cS}{\mathcal{S}}

\setcounter{tocdepth}{1}
\newcommand{\et}{\text{\'et}}
\newcommand{\Spec}{\text{Spec}}
\newcommand{\Gal}{\text{Gal}}

%%% From Martinet
%%Specific signs 
\newcommand{\abs}[1]{\lvert#1\rvert} 
 
\newcommand{\ctl}{\centerline} 
\newcommand{\ifff}{{if and only if }} 
\newcommand{\nd}{{\text{ and }}} 
\newcommand{\noi}{{\noindent}} 
\DeclareMathOperator{\sgn}{sgn} 
\DeclareMathOperator{\cl}{cl} 
 
\newcommand{\lan}{{\langle}} \newcommand{\ran}{{\rangle}}

\newcommand{\stx}{\begin{smallmatrix}} \newcommand{\estx}{\end{smallmatrix}} 

\newcommand{\wdt}{\widetilde} 
\newcommand{\tld}{{\hbox{\!\raise-.4ex\hbox{\,$\,\wdt{}\,$\,}}}} 
 
\newcommand{\cB}{{\mathcal B}} 
\newcommand{\cC}{{\mathcal C}} 
 
\newcommand{\cL}{{\mathcal L}} 
\DeclareMathOperator{\End}{End} 
\DeclareMathOperator{\Mat}{Mat}

%%% newly added by PEV 
\newcommand{\SC}{\mathcal{S}} % S for the Serre class
\newcommand{\St}{\mathrm{St}} % St for Steinberg module

%%% margin notes

\AtBeginDocument{%
	\def\MR#1{}
}

\begin{document}
\title[Voronoi complexes in higher dimensions]{Voronoi complexes in higher dimensions, cohomology of $GL_N(\Z)$ for $N\geq 8$ and the triviality of $K_8(\Z)$} 
\subjclass[2010]{11H55, 11F75, 11F06, 11Y99, 55N91, 19D50, 20J06}
\keywords{Perfect forms,  Voronoi complex, group cohomology, modular groups, Steinberg modules, K-theory of integers,
 well-rounded lattices}

\author{Mathieu Dutour Sikiri\'c}
\address{Rudjer Bo\v skovi\'c Institute, Bijeni\v cka 54, 10000 Zagreb, Croatia}
\email{mathieu.dutour@gmail.com}

\author{Philippe Elbaz-Vincent}
\address{Univ. Grenoble Alpes, CNRS, Institut Fourier, F-38000 Grenoble, France}
\email{Philippe.Elbaz-Vincent@math.cnrs.fr}

\author{Alexander Kupers}
\address{Harvard University, Department of Mathematics,
	One Oxford Street,	Cambridge, 02138 MA, USA}
\email{kupers@math.harvard.edu}

\author{Jacques Martinet}
\address{Universit\'e de Bordeaux,
  Institut de Math\'ematiques,
  351, cours de la Lib\'e\-ration,
  33405 Talence cedex, France}
\email{Jacques.Martinet@math.cnrs.fr}

\renewcommand{\shortauthors}{Philippe Elbaz-Vincent et.~al.}

\begin{abstract} 
	We enumerate the low dimensional cells in the Voronoi cell complexes
	attached to the modular groups $\SL_N(\Z)$ and $\GL_N(\Z)$ for $N=8,9,10,11$, using quotient sublattices techniques for $N=8,9$  and linear programming methods for higher dimensions. These enumerations allow us to compute some cohomology of these groups and prove that $K_8(\Z) = 0$.
\end{abstract}

\maketitle

% \section{Introduction} 
 
 Let $N\geq 1$ be an integer and let $\SL_N(\Z)$ be the modular group of integral matrices
 with determinant one. Our goal is to compute its cohomology groups with trivial coefficients, i.e.~$H^q(\SL_N(\Z);\Z)$. These are known in the cases $N \leq 7$: $N=2$ is classical  (e.g. II.7, Ex.3 of ~\cite{B}), $N=3$ is due to Soul\'e \cite{Soule-SL3}, $N=4$ is due to Lee and Szczarba \cite{LS}, and $N=5,6,7$ are due to Elbaz-Vincent, Gangl and Soul\'e \cite{EGS}.

 In Theorem 
\ref{cohomology} below, we give partial information for the cases $N=8,9,10$. For these calculations we follow mainly the methods of \cite{LS} and \cite{EGS}, by investigating the Voronoi complexes associated to these modular groups.

Recall that a \emph{perfect form} in $N$ variables is a positive definite real quadratic form $h$ on $\R^N$ which is uniquely determined (up to a scalar) by its set of integral minimal vectors.
Voronoi proved in \cite{Vo} that there are finitely many perfect forms of rank $N$, modulo the action of $\SL_N(\Z)$.
These are known for $N\leq 8$ (see \S\ref{sec1} below). These finitely many orbits of perfect forms give the top-dimensional generators in the Voronoi complex; the rest of the complex is constructed from these perfect forms. Unfortunately, we cannot work with the full Voronoi complex  for $N=8$ due to its size, which 
is beyond our computing capabilities,
and we do not have complete information for $N>8$.
However, it turns out that it \emph{is} possible to obtain partial information on the top and bottom parts
of the Voronoi complexes for $8 \leq N \leq 10$ and conjecturally $N=11$.
In particular, we can enumerate the cells of lowest dimension explicitly
using methods based on sublattices and relative index (cf.~\S\ref{martinet}). For other cases, we can use linear
programming in order to full enumeration
in given cellular dimensions (cf.~\S\ref{dutour}). 

Voronoi used perfect forms to define a cell decomposition of the space $X_N^*$ of positive real quadratic forms, whose kernel is defined over $\Q$. This cell decomposition (cf. \S\ref{sec2}) is invariant under $\SL_N(\Z)$, hence it can be used to give a chain complex which computes the equivariant homology of $X_N^*$ modulo its boundary: this is the Voronoi complex. On the other hand, this equivariant homology
turns out to be isomorphic to the groups $H_q\big(\SL_N(\Z); \St_N\big)$, where $\St_N$ is the Steinberg module
(see \cite{BS} and \S\ref{ssec2.4} below).
Finally, Borel--Serre duality asserts that the homology $H_*\big(\SL_N(\Z); \St_N\big)$ is dual to 
the cohomology $H^*\big(\SL_N(\Z);\Z\big)$ (modulo 
torsion at primes $\leq N+1$). Thus the results mentioned above give partial information about the cohomology of modular groups.

We will use this to obtain information about the algebraic K-theory of the integers. In particular, we will prove that the group $K_8(\Z)$ is trivial (Theorem \ref{K8Z}) and discuss its consequences for the Kummer--Vandiver conjecture (cf.~\S\ref{KV}).

\smallskip {\bf Organization of paper:} 
In \S \ref{sec1}, we recall the Voronoi theory of perfect forms and the  Voronoi complex
 which computes the homology groups 
$H_q\big(\Gamma,\St_\Gamma\big)$ with $\Gamma=\SL_N(\Z)$ or $\GL_N(\Z)$.
In \S \ref{sec3}, we give an explicit enumeration of the low dimensional cells 
of the Voronoi complexes associated to $\Gamma$. In \S \ref{sec4}, we present another method based on linear programming.
In \S\ref{sec5} we give
a partial description of the Voronoi complex associated to modular groups of rank $N=8$ up to $12$.
In \S \ref{sec6} we compute some homology groups of $\Gamma$ with coefficients in the Steinberg module and 
we explain how to compute part of the cohomology of $\SL_N(\Z)$ and $GL_N(\Z)$
(modulo torsion) for $N\geq 8$. % from our results on the homology of the Vorono\"{i} complex in \S4.
In \S\ref{sec7}, we use these results to get some information on $K_m(\Z)$ for $m\geq 8$ and in particular show that $K_8(\Z)$ is trivial.
In \S\ref{sec8} we give some arithmetic applications.%\\[3pt]

Some of the enumerations of configurations of vectors had already been announced and used in \cite{StableBetti}.
Results concerning the triviality of $K_8(\Z)$ had already been announced in \cite{pev16,kupersshort}.

\smallskip
%\medskip 
{\bf Acknowledgments:} Philippe Elbaz-Vincent is  partially supported by the French National Research Agency in the framework 
of the Investissements 
d'Avenir program (ANR-15-IDEX-02). Furthermore part of this work was done during his stay at the Hausdorff Research
Institute for Mathematics during the trimester ``Periods in Number Theory, Algebraic Geometry and Physics''. Alexander Kupers was supported by the Danish National Research Foundation through the Centre for Symmetry and Deformation (DNRF92) and by the European Research Council (ERC) under the European Union's Horizon 2020 research and innovation programme (grant agreement No.\, 682922) and  is currently supported by NSF grant DMS-1803766.

\tableofcontents

\section{The Voronoi reduction theory}\label{sec1} In this Section we recall some aspects of the Voronoi reduction theory \cite{Vo,martinet}.

\subsection{Perfect forms}\label{ssec1.1} Let $N \geq 2$ be an integer. We let $C_N$ be the set of 
positive definite real quadratic forms in $N$ variables. Given $h \in C_N$, let $m(h)$ be the finite set of minimal vectors of $h$,
i.e. vectors $v \in \Z^N$, $v \ne 0$, such that $h(v)$ is minimal. A form $h$ is called {\it perfect} 
when $m(h)$ determines $h$ up to scalar: if $h' \in C_N$ is such that $m(h') = m(h)$, then $h'$ is proportional to $h$. 
\begin{example}
 The form $h(x,y)=x^2+y^2$ has minimum 1 and precisely 4~minimal vectors $\pm (1,0)$ and $\pm (0,1)$. 
 This form is not perfect, because there is
 an infinite number of positive definite quadratic forms having these minimal vectors, namely the forms $h(x,y)=x^2+a x y+y^2$
where $a$ is a non-negative real number less than 1.
By contrast, the form $h(x,y)=x^2+x y+y^2$ has also minimum 1 and has exactly 6 minimal vectors, viz.~the ones 
above and $\pm (1,-1)$. This form is perfect, the associated lattice is the ``honeycomb lattice''.
\end{example}

 Denote by $C_N^*$ the set of non-negative real quadratic forms on ${\mathbb R}^N$ the kernel of which
 is spanned by a proper linear subspace of ${\mathbb Q}^N$, by $X_N^*$ the quotient of $C_N^*$ by positive
 real homotheties, and by $\pi : C_N^* \to X_N^*$ the projection. Let $X_N = \pi (C_N)$ and $\partial X_N^* = X_N^* - X_N$.
 Let $\Gamma$ be either $GL_N (\Z)$ or $\SL_N (\Z)$. The group $\Gamma$ acts on $C_N^*$ and $X_N^*$ on the right by the formula
\[
h \cdot \gamma = \gamma^t \, h \, \gamma \, , \quad \gamma \in \Gamma \, , \ h \in C_N^* \, ,
\]
where $h$ is viewed as a symmetric matrix and $\gamma^t$ is the transpose of the matrix $\gamma$.
Voronoi proved that there are only finitely many perfect forms modulo the action of $\Gamma$ and
multiplication by positive real numbers (\cite{Vo}, Thm.~p.110).\\
Table \ref{TableNumberExtremeForms} gives the current state of the art on the enumeration of perfect forms.

\begin{table}[h]
  \begin{tabular}{cccccccccc}
    \toprule
    rank  & 1 & 2 & 3 & 4 & 5 & 6 & 7 & 8  & 9 \\
    \midrule
    \# classes & 1 & 1 & 1 & 2 & 3 & 7& 33 & 10916 & $\geqslant 2.3\times 10^7$ \\
    \bottomrule
  \end{tabular}  
	\caption{Known results on the number of perfect forms up to dimension $9$}
  \label{TableNumberExtremeForms}
\end{table}

The classification of perfect forms of rank 8 was achieved by Dutour Sikiri\'c, Sch\"urmann and \nobreak{Vallentin} \cite{dsv,Sch}.
Partial results for dimension $9$ are reported in \cite{woerdenMasterThesis}. The corresponding classification 
for rank 7 was completed by Jaquet \cite{jaquet}, for rank 6 by Barnes \cite{barnes}, 
for rank 5 and 4 by Korkine and Zolotarev \cite{zolotarev77,zolotarev72}, 
for dimension 3 by Gauss \cite{gauss} and for dimension 2 by Lagrange \cite{lagrange}. We refer to the book of Martinet \cite{martinet} for more details on the results up to rank 7.
While the classification of perfect forms of higher rank is not well understood, we know from Bacher \cite{Bacher17}
that the number of representatives grows at least  exponentially with the rank and 
from van Woerden\cite{woerden2019upper} is bounded by $e^{O(d^2\log(d))}$ for perfect forms of rank $d$.

\subsection{The Voronoi complex}\label{sec2}

\begin{nota} For any positive integer $n$ we let ${\mathcal S}_n$ be the class of finite abelian groups the order 
	of which has only prime factors less than or equal to $n$.\end{nota}
\subsubsection{The cell complex}\label{ssec2.1}	
Given $v \in \Z^N - \{ 0 \}$ we let $\hat v \in C_N^*$ be the form defined by
\[
\hat v (x) = (v \mid x)^2 \, , \ x \in {\mathbb R}^N \, ,
\]
where $(v \mid x)$ is the scalar product of $v$ and $x$. The {\it convex hull in }$X_N^*$ of a finite subset
$B \subset \Z^N - \{ \mathbf 0 \}$ is the subset of $X_N^*$ which is the image under $\pi$ of the quadratic 
forms $\underset{j}{\sum} \, \lambda_j \, \hat{v_j}\in C_N^*$, where $v_j \in B$ and $\lambda_j \geq 0$. 
For any perfect form $h$, we let $\sigma (h) \subset X_N^*$ be the convex hull of the set $m(h)$ of its minimal vectors. 
Vorono{i} proved in \cite{Vo}, \S\S8-15, that the cells $\sigma (h)$ and their intersections, as $h$ runs over all 
perfect forms, define a cell decomposition of $X_N^*$, which is invariant under the action of $\Gamma$. We endow $X_N^*$ with 
the corresponding $CW$-topology. If $\tau$ is a closed cell in $X_N^*$ and $h$ a perfect form with $\tau \subset \sigma (h)$, 
we let $m(\tau)$ be the set of vectors $v$ in $m(h)$ such that $\hat v$ lies in $\tau$. Any closed cell $\tau$ is the convex 
hull of $m(\tau)$, and for any two closed cells $\tau$, $\tau'$ in $X_N^*$ we have  $m(\tau) \cap m(\tau') = m (\tau \cap \tau')$.

We shall now recall an explicit description of the Voronoi complex and its the differential from \cite{EGS}.

Let $d(N) = N(N+1)/2-1$ be the dimension of $X_N^*$ and $n \leq d(N)$ a natural integer. We denote 
by $\Sigma_n^\star= \Sigma_n^\star(\Gamma)$ a set of representatives, 
modulo the action of $\Gamma$, of those cells of dimension $n$ in $X_N^*$ which meet $X_N$, 
and by $\Sigma_n =\Sigma_n(\Gamma) \subset \Sigma_n^\star(\Gamma)$ the cells $\sigma$ 
for which any element of the stabilizer $\Gamma_{\sigma}$ %=\Stab_\Gamma(\sigma)
of $\sigma$ in $\Gamma$ preserves orientation. 

Let $V_n$ be the free abelian group generated by $\Sigma_n$.
We define as follows a map
\[
d_n \colon V_n \to V_{n-1} \, .
\]

For each closed cell $\sigma$ in $X_N^*$ we fix an orientation of $\sigma$, i.e. an orientation of the real vector 
space ${\R} (\sigma)$ of symmetric matrices spanned by the forms $\hat v$ with $v \in m(\sigma)$. Let $\sigma \in \Sigma_n$ 
and let $\tau'$ be a face of $\sigma$ which is equivalent under $\Gamma$ to an element in $\Sigma_{n-1}$ (i.e. $\tau'$ neither lies 
on the boundary nor has elements in its stabilizer reversing the orientation). Given a positive basis $B'$ of ${\mathbb R} (\tau')$ 
we get a basis $B$ of ${\mathbb R} (\sigma)\supset {\mathbb R} (\tau')$ by appending to $B'$ a vector $\hat v$, where 
$v \in m(\sigma) - m(\tau')$. We let $\varepsilon (\tau' , \sigma) = \pm 1$ be the sign of the orientation of $B$ in the 
oriented vector space ${\mathbb R} (\sigma)$ (this sign does not depend on the choice of $v$).

\smallskip

Next, let $\tau \in \Sigma_{n-1}$ be the (unique) cell equivalent to $\tau'$ and let $\gamma\in\Gamma$ be 
such that $\tau'= \tau \cdot \gamma$. We define $\eta (\tau , \tau') = 1$ (resp. $\eta (\tau , \tau') = -1$) 
when $\gamma$ is compatible (resp. incompatible) with the chosen orientations of ${\mathbb R} (\tau)$ and ${\mathbb R} (\tau')$.

\smallskip

Finally, if $\sigma \in \Sigma_n$ and $\tau \in \Sigma_{n-1}$, we define the incidence number $[\sigma:\tau]$ 
for the Voronoi complex as
\begin{equation}
 [\sigma:\tau]=\sum_{\tau'} \eta (\tau , \tau') \, \varepsilon (\tau' , \sigma)\, ,
\end{equation}
where $\tau'$ runs through the set of faces of $\sigma$ which are equivalent to $\tau$. If $\tau$ is
not equivalent to a face of $\sigma$, we set $[\sigma:\tau]=0$.
The following map is thus well defined
\begin{equation}
\label{eq1}
d_n (\sigma) = \sum_{\tau \in \Sigma_{n-1}} [\sigma:\tau] \, \tau \, .
\end{equation}
It turns out that the map $d$ generalizes the usual differential of regular CW-complex 
to the case of the Voronoi complex (which is not regular CW-complex).

\subsubsection{The associated equivariant spectral sequence}\label{ssec2.2} 

According to Section VII.7 of \cite{B}, 
there is a spectral sequence $E_{pq}^r$ converging to the equivariant homology groups 
$H_{p+q}^{\Gamma} (X_N^* , \partial X_N^* ; \Z)$ of the pair $(X_N^* , \partial X_N^*)$, with $E^1$-page given by 
\[
E_{pq}^1 = \bigoplus_{\sigma \in \Sigma_p^\star} H_q (\Gamma_{\sigma} ; \Z_{\sigma}) \, ,
\]
where $\Z_{\sigma}$ is the orientation module of the cell $\sigma$ and, as above,
 $\Sigma_p^\star$ is a set of representatives, modulo $\Gamma$, of the $p$-cells $\sigma$ in $X_N^*$ which meet $X_N$. 
Notice that the action of $\Gamma_{\sigma}$ on  $\Z_{\sigma}$ is given by $\eta$ described above.
 Since $\sigma$ meets $X_N$, its stabilizer $\Gamma_{\sigma}$ is finite and, by Lemma \ref{lemma1} in \S\ref{sec4} below,
 the order of $\Gamma_{\sigma}$ is divisible only by primes $p \leq N+1$. Therefore, when $q$ is positive,
 the group $H_q (\Gamma_{\sigma} ; \Z_{\sigma})$ lies in ${\mathcal S}_{N+1}$. 

When $\Gamma_{\sigma}$ happens to contain an element which changes the orientation of $\sigma$, 
the group $H_0 (\Gamma_{\sigma} ; \Z_{\sigma})$ is killed by $2$, otherwise  $H_0 (\Gamma_{\sigma} ; \Z_{\sigma})\cong \Z_{\sigma}$.
%% \bra otherwise 
%% $\Gamma_{\sigma} $ acts trivially on $\Z_{\sigma})$ and $H_0 (\Gamma_{\sigma} , \Z_{\sigma})\cong \Z_{\sigma})$??\ket. 
Therefore, modulo ${\mathcal S}_2$, we have
\[
E_{n\,0}^1 = \bigoplus_{\sigma \in \Sigma_n} \Z_{\sigma} \, ,
\]
and the choice of an orientation for each cell $\sigma$ gives an isomorphism between $E_{n\,0}^1$ and $V_n$.

\begin{prop}\cite[\S3.3, p.591-592]{EGS} The differential
\[
d_n^1 \colon E_{n\,0}^1 \to E_{n-1,0}^1
\]
coincides, 
up to sign, with the map $d_n$ defined in \ref{ssec2.1}. 
\end{prop}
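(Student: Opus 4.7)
The plan is to follow the standard recipe for computing the $d^1$-differential in the equivariant homology spectral sequence of a $\Gamma$-CW pair (\cite[\S VII.7]{B}) and to verify that, under the isomorphism $E^1_{n,0} \cong V_n$ valid modulo $\mathcal{S}_2$, it agrees with the explicitly defined map $d_n$ up to an overall sign.

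First I would recall that the $E^1$-page is obtained by applying $H_*(\Gamma_{-};\Z_{-})$ to the $\Gamma$-equivariant cellular chain complex of the pair $(X_N^*,\partial X_N^*)$. In the bottom row $q=0$ the differential $d^1$ is induced by the ordinary cellular boundary: for the chosen orientation on $\sigma \in \Sigma_n^\star$ we have $\partial \sigma = \sum \varepsilon(\tau',\sigma)\,\tau'$, summed over the codimension-one faces $\tau'$ meeting $X_N$, and the sign $\varepsilon(\tau',\sigma)$ is precisely the one defined in \S\ref{ssec2.1}. Faces lying entirely in $\partial X_N^*$ do not contribute, since only cells meeting $X_N$ appear in the spectral sequence.

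Next I would group these faces by $\Gamma$-orbits. Each $\tau'$ is equivalent to a unique representative $\tau \in \Sigma_{n-1}^\star$ via some $\gamma \in \Gamma$ with $\tau' = \tau \cdot \gamma$; pushforward along $\gamma$ yields an isomorphism $H_0(\Gamma_\tau;\Z_\tau) \xrightarrow{\sim} H_0(\Gamma_{\tau'};\Z_{\tau'})$ which sends the chosen orientation generator of the source to $\eta(\tau,\tau')$ times that of the target, by the very definition of $\eta$. A different choice of $\gamma$ differs by an element of $\Gamma_\tau$, which acts on $\Z_\tau$ through precisely this same sign, so $\eta(\tau,\tau')$ is well defined on $H_0$. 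Summing orbit by orbit gives
\[
d^1(\sigma) \;=\; \sum_{\tau \in \Sigma_{n-1}^\star} \Bigl(\sum_{\tau'} \varepsilon(\tau',\sigma)\,\eta(\tau,\tau')\Bigr)\,\tau,
\]
where the inner sum ranges over the faces $\tau'$ of $\sigma$ in the $\Gamma$-orbit of $\tau$.

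To pass from $\Sigma_{n-1}^\star$ to $\Sigma_{n-1}$, I would observe that whenever $\Gamma_\tau$ contains an orientation-reversing element, $H_0(\Gamma_\tau;\Z_\tau)$ is killed by $2$ and hence lies in $\mathcal{S}_2$, disappearing from the reduction; under the resulting identification $E^1_{n,0} \cong V_n$ modulo $\mathcal{S}_2$, the displayed formula becomes exactly $\sum_{\tau \in \Sigma_{n-1}} [\sigma:\tau]\,\tau = d_n(\sigma)$, up to the overall sign coming from Brown's conventions. The main obstacle will be the sign bookkeeping: checking that the pushforward on $H_0$ genuinely produces the factor $\eta(\tau,\tau')$ with the sign convention fixed in \S\ref{ssec2.1}, confirming independence of the choice of $\gamma$ through the $\Gamma_\tau$-action on $\Z_\tau$, and verifying that reduction modulo $\mathcal{S}_2$ commutes with the differential so that the identification $E^1_{n,0}/\mathcal{S}_2 \cong V_n$ intertwines $d^1$ with $d_n$.
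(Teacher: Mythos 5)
Your argument is correct and is essentially the standard unwinding of the $d^1$-differential in Brown's equivariant homology spectral sequence, which is also how \cite{EGS} (the source this proposition cites) proves it: compute the cellular boundary $\partial\sigma = \sum \varepsilon(\tau',\sigma)\,\tau'$ in the relative $\Gamma$-equivariant chain complex, group faces by $\Gamma$-orbits picking up the sign $\eta(\tau,\tau')$ from the induced map on $H_0(\Gamma_\tau;\Z_\tau) \to H_0(\Gamma_{\tau'};\Z_{\tau'})$, check well-definedness via the action of $\Gamma_\tau$ on $\Z_\tau$, and observe that orbits whose representative has an orientation-reversing stabilizer contribute only $2$-torsion and disappear modulo $\mathcal{S}_2$. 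The sign bookkeeping and the identification $E^1_{n,0}\cong V_n$ mod $\mathcal{S}_2$ are handled correctly.
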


As pointed out on p.589 of \cite{EGS}, the identity $d_{n-1} \circ d_n = 0$  gives us a non-trivial test of our explicit computations.

%% \bra We need to introduce a name for the Voronoi complex, suggestion $Vor_\Gamma$, as $V_N$
%% already has a meaning and we also want to distinguish between SL and GL.\ket

\begin{nota} The resulting complex $(V_\bullet,d_\bullet)$ is denoted by $\Vor_\Gamma$, and is called  the {\em Voronoi complex}.\end{nota}

\subsubsection{The Steinberg module}\label{ssec2.4} 

Let $T_N$ be the spherical Tits building of $\SL_N$ over ${\mathbb Q}$, i.e.~the simplicial set obtained as the nerve of the ordered set of non-zero proper linear subspaces of ${\mathbb Q}^N$. The Solomon--Tits theorem says that $T_N$ is homotopy equivalent to wedge of $(N-2)$-spheres, see Theorem IV.5.2 of \cite{B}. Thus the reduced homology $\tilde H_q (T_N,\Z)$ of $T_N$ with integral coefficients is zero except when $q = N-2$, in which case
\[
\tilde H_{N-2} (T_N,\Z) \eqqcolon {\St}_N
\]
is by definition the \emph{Steinberg module}. According to Proposition 1 of \cite{SouleK4}, the relative homology 
groups $H_q (X_N^* , \partial X_N^* ; \Z)$ are zero except when $q = N-1$, and
\[
H_{N-1} (X_N^* , \partial X_N^* ; \Z) = {\St}_N \, .
\]
From this it follows that, for all $m \in {\N}$,
\begin{equation}
\label{fact1}
H_m^{\Gamma} (X_N^* , \partial X_N^* ; \Z) \iso H_{m-N+1} (\Gamma ; {\St}_N)
\end{equation}
(see e.g.~\S3.1 of \cite{SouleK4}). Combining this equality with the previous sections, we obtain:

\begin{prop}\label{vorsteinberg}
For arbitrary positive integers $N>1$ and $m$. We have the following isomorphism modulo ${\cS}_{N+1}$
\begin{equation}
\label{eq3}
H_{m-N+1} (\Gamma ; {\St}_N) \iso  H_m (\Vor_\Gamma) \, \mod  {\cS}_{N+1}.
\end{equation}
\end{prop}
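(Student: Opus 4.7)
The plan is to combine two ingredients already assembled in the section: the equivariant spectral sequence of Section VII.7 of \cite{B} that converges to $H_{p+q}^{\Gamma}(X_N^*,\partial X_N^*;\Z)$, and the identification \eqref{fact1} of this equivariant homology with $H_{m-N+1}(\Gamma;\St_N)$. The strategy is to show that, modulo $\cS_{N+1}$, the spectral sequence collapses onto its bottom row, and that this bottom row with its $d^1$-differential is precisely the Voronoi complex $\Vor_\Gamma$.

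First I would analyze the $E^1$-page modulo $\cS_{N+1}$. For every cell $\sigma$ meeting $X_N$, the stabilizer $\Gamma_\sigma$ is finite and, by Lemma \ref{lemma1} invoked above, its order involves only primes $p\le N+1$. Hence for $q>0$ the group $H_q(\Gamma_\sigma;\Z_\sigma)$ lies in $\cS_{N+1}$, and so every entry of $E^1_{p,q}$ with $q>0$ vanishes modulo $\cS_{N+1}$. On the bottom row, we have already observed that modulo $\cS_2\subset\cS_{N+1}$ there is a canonical isomorphism $E^1_{p,0}\cong V_p$ coming from the orientation choices, and the cited proposition identifies $d^1_{p,0}$ up to sign with the Voronoi differential $d_p$. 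Consequently, as chain complexes modulo $\cS_{N+1}$,
\[
\bigl(E^1_{\bullet,0},\, d^1_{\bullet,0}\bigr)\;\cong\; \Vor_\Gamma.
\]

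Next I would verify the degeneration. Since $\cS_{N+1}$ is a Serre class closed under subquotients and extensions, $E^2_{p,q}=0$ modulo $\cS_{N+1}$ whenever $q>0$, while $E^2_{p,0}\cong H_p(\Vor_\Gamma)$ modulo $\cS_{N+1}$. For $r\ge 2$, the differential $d^r\colon E^r_{p,0}\to E^r_{p-r,r-1}$ lands in a row of positive second index and therefore targets an object that is zero modulo $\cS_{N+1}$; likewise no nontrivial differential enters $E^r_{p,0}$ modulo $\cS_{N+1}$. Hence $E^\infty_{p,0}\cong H_p(\Vor_\Gamma)$ modulo $\cS_{N+1}$, and the filtration of $H_m^\Gamma(X_N^*,\partial X_N^*;\Z)$ reduces to this single nonzero quotient modulo $\cS_{N+1}$. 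Invoking \eqref{fact1} yields the desired isomorphism.

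The main obstacle is purely bookkeeping inside a Serre-class framework rather than geometry: one must check that reducing the whole spectral sequence modulo $\cS_{N+1}$ is legitimate, which ultimately relies on $\cS_{N+1}$ being closed under extensions so that the abutment filtration descends to the stated isomorphism. The only geometric input needed beyond what is cited is the prime-divisibility bound on stabilizers, which is precisely Lemma \ref{lemma1}.
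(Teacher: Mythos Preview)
Your proposal is correct and follows essentially the same approach as the paper: the paper does not write out a separate proof but simply says ``Combining this equality with the previous sections, we obtain'' the proposition, and what you have done is spell out exactly that combination---the spectral sequence collapses modulo $\cS_{N+1}$ onto its bottom row, which is $\Vor_\Gamma$, and \eqref{fact1} identifies the abutment. Your added remark about the Serre-class bookkeeping is a helpful clarification but not a departure from the paper's intended argument.
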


\section{Small cells of quadratic forms}\label{sec3}\label{martinet}
The method described in \cite{M1}, 
Sections 9.2 and~9.3, though not very efficient, 
can be used to classify cells of dimension $\frac{n(n+1)}2-t$ 
for small values of $t$; 
these are the ``small cells'' referred to in the title.

\subsection{Minimal classes and the perfection rank.} 

Our aim is the study of the Voronoi complex  in a given 
(cellular) dimension~$n$. We shall make use of Watson's index theory, a theory
which is better understood in terms of lattices. 
For this reason we first recall some data of the ``dictionary'' 
which link lattices with quadratic forms.

\smallskip 

We identify a quadratic form $q$ with the $n\times n$ symmetric 
matrix $A$ such that $q(x)=X^{t} A X$, where $X$ is the column-vector 
of the components of $x\in\R^n$. Let $E$ be an $n$-dimensional Euclidean 
space, on which the {\em norm of $x$ is $N(x)=x\cdot x$}. 
With a lattice $\La\subset E$ (discrete subgroup of $E$ 
of rank~$n$) and a basis $\cB=(e_1,\dots,e_n)$ for $\La$ over $\Z$, 
we associate the Gram matrix $A=(e_i\cdot e_j)$ of $\cB$ 
and the corresponding positive, definite quadratic form $q$. 
The {\em determinant of $\La$} is $\det(A)$, 
the {\em discriminant} of~$q$. 
The {\em minimum} of $\La$, its set $m(\La)$ 
of {\em minimal vectors} correspond to the same notions 
for quadratic forms. 
We denote by $s$ the number of {\em pairs of minimal vectors} 
of~$\La$ (or of~$q$). Besides this ``kissing number'' 
an important invariant is the {\em perfection rank~$r$}, 
the definition of which we recall now. 
Given a line $D\subset E$, we denote by $p_D\in\End^s(E)$ 
the orthogonal projection to~$D$, and write $p_x$ if $D=\R\,x$ 
for some $x\ne 0$ in~$E$. Note the formulae 

\smallskip
\noi\ctl{$p_x(y)=\frac{x\cdot y}{x\cdot x}\,x\ \nd\ 
\Mat(\cB^*,\cB,p_x)=X X^{t}$} 

\smallskip\noi 
where $\cB^*$ is the dual basis to $\cB$, 
defined by $e_i\cdot e_j^*=\delta_{i,j}$. 

%\smallskip\noi 
%%Definition{1.1} 
\begin{defi} \label{defpef} {\rm 
The {\em perfection rank of a family $D_1,\dots,D_s$ of lines in $E$} 
is the dimension of the span in $End^s(E)$ 
of the projections~$p_{D_i}$. 
A {\em perfection relation on the set $\{D_i\}$} is a non-trivial 
$\R$-linear relation of the form $\sum\la_i\,p_{D_i}=0$. 
The {\em perfection rank~$r$ of a lattice $\La$} 
is the perfection rank of the set of lines $\R\,x$, $x\in S(\La)$; 
%thus the {\em perfection rank of a quadratic form $q$} 
that of a {\em quadratic form $q$} is the rank in $\mathrm{Sym}_n(\R)$ 
of the set $\{X X^{t}\}$, $X\in m(q)$.} 
\end{defi} 

We partition the space $\cL$ of lattices into {\em minimal classes} 
by the relation $\La\sim\La'$ if and only if there exists $u\in\GL(E)$ such that $u(\La)=\La'$ and $u(m(\La))=m(\La')$, and order the minimal classes by the relation 
$\cC\prec\cC'$ if and only if there exists $\La\in\cC$ and $\La'\in\cC'$ such that  $m(\La)\subset m(\La')$. 

The dictionary above establishes a one-to-one correspondence 
between the set of minimal classes on the one hand, 
and the set of cells up to equivalence of (positive, definite) 
quadratic forms having a given minimum on the other hand. 
These are finite sets. 

\smallskip 

In the sequel, we restrict ourselves to {\em well-rounded lattices} 
(or {\em forms}), those for which the minimal vectors span $E$. 
The following proposition provides an easy test to decide whether 
two lattices belong to the same minimal class. 
Given a lattice $\La$ and a basis $\cB$ for $\La$, 
and a set $S$' of representatives of pairs of minimal vectors 
of $S\La$, let $T$ be the $n\times s$ matrix of the components 
of the vectors of $S'$ on $\cB$, and let $B=T\,T^{t}$. 
This is the {\em barycenter matrix of $(\La,\cB)$}. 
Its equivalence class under $\GL_n(\Z)$ does not depend on $\cB$. 

%%Proposition 1.2 
\begin{prop} \label{propbarycentre} 
Two lattices belong to the same minimal class \ifff 
their barycenter matrices define the same class under $\GL_n(\Z)$. 
\end{prop}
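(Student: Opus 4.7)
The plan is to check both directions of the iff, after first verifying that the $\GL_n(\Z)$-class of $B$ is independent of the choices entering its definition. For well-definedness, replacing $\cB$ by $P\cB$ with $P\in \GL_n(\Z)$ replaces $T$ by $P^{-1}T$ and hence $B$ by $P^{-1}B(P^{-1})^t$; reordering $S'$ or flipping signs of some of its elements multiplies $T$ on the right by a signed permutation matrix $Q$, leaving $B=TT^t$ unchanged since $QQ^t=I$.

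For the forward implication, given $u\in\GL(E)$ with $u(\La)=\La'$ and $u(m(\La))=m(\La')$, I pick any basis $\cB$ of $\La$ and transport it to $\cB':=u(\cB)$, a basis of $\La'$. Because $u$ is $\R$-linear, the components of $u(s)$ in $\cB'$ equal those of $s$ in $\cB$, so the component matrices $T,T'$ coincide column-by-column for these choices of bases and representatives; in particular $B=B'$.

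For the reverse implication, after a change of basis I may assume $B=B'$ on the nose, and then define $u\in\GL(E)$ by $u(e_i)=e_i'$. This gives a $\Z$-module isomorphism $\La\to\La'$, and what remains is the equality $u(m(\La))=m(\La')$, which amounts to the combinatorial statement that the multisets of columns of $T$ and of $T'$ agree up to sign and permutation.

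This last equality is the main obstacle, since in general a positive semidefinite matrix $\sum_k X_kX_k^t$ does not determine its rank-one summands. The extra input I would use is that the columns here are minimal vector systems of well-rounded lattices, so they correspond to a closed cell $\sigma$ of the Voronoi decomposition of $X_N^*$ recalled in \S\ref{sec2}: the cell is the convex hull of the $\hat X_k$, and $\pi(B)$ lies in its relative interior as a positive combination of all of its generators. Since the relatively open Voronoi cells partition $X_N^*$, the cell $\sigma$ is determined by $\pi(B)$, and its vertices recover $\{\pm X_k\}$ up to sign; applying this recovery to $T'$ with the same $B$ yields $\{\pm X_k\}=\{\pm X_k'\}$, which completes the proof.
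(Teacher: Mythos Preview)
Your argument is correct, and it supplies a self-contained proof where the paper simply cites Proposition~9.7.2 of \cite{martinet}.  The well-definedness check and the forward implication are routine and handled cleanly.

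For the reverse implication your use of the Voronoi cell structure is sound.  Two remarks may sharpen the write-up.  First, the assertion that a strictly positive combination of the generators $\hat X_k$ lies in the relative interior of the cone they span is a general fact about finitely generated cones (for any direction $w=\sum c_k\hat X_k$ in the span, $\sum(\lambda_k+\varepsilon c_k)\hat X_k$ stays in the cone for small~$\varepsilon$); you use it implicitly and might state it.  Second, your phrase ``its vertices recover $\{\pm X_k\}$'' deserves a word of justification: each $\hat X_k=X_kX_k^t$ is a rank-one PSD matrix, hence spans an extreme ray of the full PSD cone and \emph{a fortiori} of any subcone containing it; since the closed cell is the conical hull of the $\hat X_k$, its extreme rays are exactly these, and a rank-one PSD matrix determines its generating integer vector up to sign.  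With this, the recovery step is rigorous.

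So the cell $\sigma$ containing $\pi(B)$ is uniquely determined (open cells partition $X_N^*$), its set of extreme rays is $\{\hat X_k\}$, and applying the same to $T'$ with $B'=B$ gives $\{\pm X_k\}=\{\pm X_k'\}$, exactly as you conclude.  This is a clean geometric argument; the cited book proof proceeds along compatible lines.
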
 

\begin{proof} 
This is Proposition~9.7.2 of \cite{martinet}.
\end{proof} 

%%Lemma 1.3 
\begin{lem} \label{lem2n} 
Any perfection relation in $\End^s(E)$ between projections 
to vectors of $E$ may be written in the form 

%\smallskip 
\ctl{$\sum _{x\in S}\,\la_x p_x=\sum _{y\in T}\,\mu_y p_y$} 

\smallskip\noi 
where $\la_x,\,\mu_y$ are strictly positive and $S$ and $T$ span 
the same subspace of~$E$. 
\end{lem}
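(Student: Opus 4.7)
The plan is to exploit the positive-semidefiniteness of the orthogonal projections $p_x$ together with the strict positivity of the coefficients.

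First I would start from any perfection relation $\sum_i c_i\, p_{v_i}=0$ in $\End^s(E)$, and split the index set according to the sign of $c_i$: putting the indices with $c_i>0$ into $S$ (with $\lambda_x=c_x$) and those with $c_i<0$ into $T$ (with $\mu_y=-c_y$), and discarding any $i$ with $c_i=0$, I obtain an equality
\[
A:=\sum_{x\in S}\lambda_x\,p_x \;=\; \sum_{y\in T}\mu_y\,p_y=:B
\]
with $\lambda_x,\mu_y>0$. Note that if either $S$ or $T$ were empty the resulting operator on the other side would be a non-trivial sum of positive semidefinite operators with strictly positive coefficients, hence non-zero; so both sides are non-empty.

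Next, the main step is to identify $\mathrm{Image}(A)$ with $V:=\mathrm{span}(S)$, and similarly $\mathrm{Image}(B)=W:=\mathrm{span}(T)$. Since each $p_x$ has image $\R x\subset V$, clearly $\mathrm{Image}(A)\subset V$. Conversely, $A$ is symmetric and positive semidefinite, and for any $y\in E$ one has
\[
y^{t}Ay \;=\; \sum_{x\in S}\lambda_x\,\frac{(x\cdot y)^2}{x\cdot x}.
\]
Because all $\lambda_x>0$, this quantity vanishes iff $x\cdot y=0$ for every $x\in S$, i.e.\ iff $y\in V^{\perp}$. Thus $\ker A=V^{\perp}$, and by symmetry $\mathrm{Image}(A)=(\ker A)^{\perp}=V$. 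The same argument applied to $B$ gives $\mathrm{Image}(B)=W$.

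Finally, the equality $A=B$ forces $V=\mathrm{Image}(A)=\mathrm{Image}(B)=W$, which is the required conclusion that $S$ and $T$ span the same subspace of $E$. I do not expect a serious obstacle here: the argument is essentially a one-line observation that a positive linear combination of rank-one positive semidefinite operators has the span of their ranges as its own image. The only subtlety worth checking is the non-degenerate case, namely that neither $S$ nor $T$ is empty, which follows because a strictly positive combination of non-zero positive semidefinite symmetric operators cannot be zero.
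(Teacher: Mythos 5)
Your argument is correct and follows essentially the same route as the paper: the key step — observing that $\sum_{x\in S}\lambda_x (x\cdot z)^2 = 0$ with all $\lambda_x>0$ forces $z\perp S$ — is identical, you merely repackage it as the statement $\ker A = (\mathrm{span}\,S)^\perp$ for the positive semidefinite operator $A=\sum\lambda_x p_x$, whereas the paper applies the relation to $z\in T^\perp$ and dots with $z$ directly.
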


\begin{proof} 
Getting rid of the zero coefficients, we obtain a relation 
of this kind for convenient subsets $S,T$ of~$E$, 
and we may moreover assume that the vectors $x,y$ have norm~$1$. 
Applying this relation on a vector $z\in T^\perp$ 
and taking the scalar products with $z$, we obtain the equality 

\ctl{$\sum_{x\in S}\,\la_x\,(x\cdot z)^2=0$} 

\smallskip\noi 
which shows that $z$ also belongs to $S^\perp$. We have thus proved 
the inclusion 
$T^\perp\subset S^\perp$, i.e., $\lan S\ran \subset\lan T\ran$, 
and exchanging $S$ and $T$ shows that $\lan S\ran=\lan T\ran$. 
\end{proof} 

%%Remark 1.4 
\begin{rem} \label{remorth} %{\rm\small 
By the lemma above, a perfection relation with non-zero coefficients 
between vectors which span an $m$-dimensional subset $F$ of $E$ 
involves at least $2m$ vectors. This is optimal for all $m\ge 2$, 
as shown by the union of two orthogonal bases $\cB$ and $\cB'$ 
for~$F$, since the sum of the orthogonal projections to the vectors 
of $\cB$ and $\cB'$ both add to the orthogonal projection to~$F$.
This construction of perfection relations accounts for those 
of the lattice $\D_4$, since $S(\D_4)$ is the union of three 
orthogonal frames.%} 
\end{rem} 

\begin{thm} \label{th(n+s)} 
The perfection rank $r$ of a well-rounded $n$-dimensional lattice 
with kissing number $s\le n+5$ is equal to~$s$. 
\end{thm}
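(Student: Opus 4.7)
The plan is to argue by contradiction and induct on $n$, the inequality $r \le s$ being automatic. So suppose $\Lambda$ is well-rounded of dimension $n$ with $s \le n+5$ pairs of minimal vectors and that there is a non-trivial perfection relation. By Lemma~\ref{lem2n} I may rewrite it as $\sum_{x \in S}\lambda_x p_x = \sum_{y \in T}\mu_y p_y$ with all $\lambda_x,\mu_y > 0$, where $S$ and $T$ span a common subspace $F \subset E$ of some dimension $m \ge 2$; Remark~\ref{remorth} then gives $t := |S|+|T| \ge 2m$. Well-roundedness of $\Lambda$ forces the remaining $s - t$ minimal pairs to supply the $n - m$ missing directions transverse to $F$, so $s - t \ge n - m$, whence $s \ge n + m$, and in particular $m \le s - n \le 5$.

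The core reduction is then to pass to the sublattice $\Lambda_F := \Lambda \cap F$. Since it contains the $\ge 2m$ minimal vectors spanning $F$, it has rank $m$; and since $\Lambda_F \subset \Lambda$ while $\Lambda_F$ contains vectors of norm $\mu = \min(\Lambda)$, its minimum is $\mu$ and its minimal vectors are exactly the minimal vectors of $\Lambda$ lying in $F$. Let $u$ denote the number of such pairs, so $u \ge t$. The original relation is then a non-trivial perfection relation among minimal vectors of $\Lambda_F$, hence $r_{\Lambda_F} < u$.

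If $m < n$, the inductive hypothesis applied to $\Lambda_F$ says that $u \le m+5$ would force $r_{\Lambda_F} = u$, contradicting $r_{\Lambda_F} < u$; therefore $u \ge m+6$, and combining with $s \ge u + (n - m)$ we obtain $s \ge n + 6$, against our hypothesis. This settles all $m \in \{2,3,4,5\}$ whenever $m < n$, hence covers the inductive step as soon as $n \ge 6$.

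What remains, and what I expect to be the main obstacle, is the case $m = n$, where $F = E$ and the induction does not descend in dimension. Here $t \ge 2n$ and $s \ge t$, so combined with $s \le n+5$ this forces $n \le 5$. For $n = 2$ the conclusion is immediate since the maximal kissing number in dimension $2$ is $3 < 2n = 4$. For $n = 3$ we would have $t = s = 6$, and the classification of $3$-dimensional lattices of maximal kissing number forces $\Lambda \sim A_3$; since $A_3$ is perfect, $r = 6 = s$. For $n = 4$ (so $s \in \{8,9\}$) and $n = 5$ (so $s = 10$), I would invoke the known classification of minimal classes of well-rounded lattices with small kissing number in these dimensions and verify case by case that the projections to the minimal vectors are linearly independent. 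This final case analysis in dimensions $4$ and $5$ is the most laborious part of the argument.
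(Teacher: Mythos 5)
Your proof follows the same essential route as the paper's: isolate the $m$-dimensional subspace $F$ spanned by the support of a putative perfection relation, deduce $m \le 5$ from $s \ge n + m$, and pass to the $m$-dimensional well-rounded sublattice $\Lambda \cap F$. The paper then finishes in a single stroke by invoking the classification of low-dimensional well-rounded lattices admitting a perfection relation --- in dimension $\le 5$ the only possibilities are $\D_4$ and, in dimension $5$, lattices containing a $\D_4$-section of the same minimum --- which forces $s(\Lambda) \ge n + 8$. You instead overlay an induction on $n$: for $m < n$ the induction hypothesis applied to $\Lambda \cap F$ yields $u \ge m + 6$ and hence $s \ge n + 6$, which is clean and correct. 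But the base cases $m = n \in \{4,5\}$ that you flag as the main obstacle and defer to ``known classification of minimal classes'' are precisely the $\D_4$ fact the paper uses, so the irreducible input is the same; the inductive packaging does not lighten the case analysis, it only postpones it. Two small remarks: in the $n=3$ base case, the step ``$t = s = 6$'' implicitly uses that the maximal kissing number in dimension $3$ is $6$ (otherwise $s$ could a priori be $7$ or $8$ under your constraint $s \le n+5$), which is worth stating as you do for $n=2$; and your inequality $s - t \ge n - m$ should really be routed through $u$ (the number of minimal pairs of $\Lambda$ lying in $F$, with $u \ge t$), since it is the $s - u$ pairs outside $F$ that must span transversally --- you do introduce $u$ a sentence later, so this is a presentational rather than a mathematical issue.
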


\begin{proof} 
By definition of the perfection rank we have $r\le s$, 
and if $r<s$, there exists a perfection relation with support 
$2m$ vectors of some $m$-dimensional subspace $F$ of $E$. 
We have $s\ge n+m$, hence $m\le 5$. Now for a lattice $L$ 
of dimension $n\le 5$, one has $s=n$ except if $L\sim\D_4$ 
or if $n=5$ and $L$ has a $\D_4$-section having the same minimum. 
Since $s(\D_4)=12$, we then have $s\ge n+8$, a contradiction. 
\end{proof} 
Notice that in the above theorem the bound $s-n\le 5$ is optimal; see Example~\ref{exad=3} below.

\subsection{Watson's index theory and very small cells.} 

\subsubsection{Codes associated with well-rounded lattices.} 
Let $\La$ be a well-rounded lattice, let $e_1,\dots,e_n$ 
be $n$~independent minimal vectors of~$\La$, 
and let $\La'$ be the sublattice of $\La$ generated by the~$e_i$. 
Then the index $[\La:\La']$ is bounded from above (by $\ga_n^{n/2}$) 
and so is the annihilator~$d$ of $\La/\La'$. 
The {\em maximal index $\imath(\La)$} of a well-rounded lattice 
$\La$ is the largest possible value of $[\La:\La']$ for 
a pair $(\La,\La')$ as above. 

Every element of $\La$ can be written in the form 

\smallskip\ctl{ 
$x=\dfrac{a_1 e_1+\dots+a_n e_n}d,\ a_i\in\Z$\,,} 

\smallskip\noi 
and if $d>1$ the systems $(a_1,\dots,a_n)\!\!\mod d$ can be viewed 
as the words of a $\Z/d\Z$-code. The codes arising this way have been 
classified for $n$ up to~$8$ in \cite{M1} (which relies on previous work 
by Watson, Ryshkov and Zahareva) and for $n=9$ in \cite{K-M-S}. 
The paper \cite{M1} relied on calculations which where feasible 
essentially by hand (together with some checks made using \textsf{PARI} \cite{PARI2}).
This is no longer possible beyond dimension~$8$, and indeed \cite{K-M-S} needed
the use of a linear programming package, which was implemented on \textsf{MAGMA} \cite{MAGMA}.

\smallskip 

By averaging on the automorphism group of the code (see Proposition 8.5 of \cite{M1}), one proves that if some code $\cC$ of length~$n$ can be lifted to a pair $(\La,\La')$ (we then say that $\cC$ is {\em admissible}), then there exist $\La$ and $\La'$ which are invariant under $\Aut(\cC)$. Then the minimum $s_{\min}$ of $s$ is attained on such a lattice $\La$, and the minimal class of $\La$ depends uniquely on~$\cC$. 

By inspection of the tables of \cite{M1}(Tableau 11.1) and \cite{K-M-S}(Tables 2-10), one proves: 

%%Proposition 2.1 
\begin{prop} \label{prop(n+6)} 
Let $d\ge 2$ and $n\le 9$, and let $\cC$ be an admissible 
$\Z/d\Z$-code. Then either $\cC$ can be lifted to a pair 
$(\La,\La')$ with $m(\La)=\{\pm e_i\}$, or we have 
$s(\La)\ge n+6$ for every lift of~$\cC$. \qed 
\end{prop}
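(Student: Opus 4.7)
The plan is to verify the dichotomy by inspecting the finite classifications of admissible codes for $n \le 9$, using the averaging principle to reduce each code to a single numerical invariant. Since the statement is immediate (and the authors indicate as much by following it with \textsf{\qedsymbol}), the task is essentially bookkeeping: show that no admissible code produces a lift with $s(\La) \in \{n+1,\dots,n+5\}$ unless it already admits a lift with $s(\La) = n$.

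First I recall from Proposition 8.5 of M1 that for every admissible $\Z/d\Z$-code $\cC$ the minimum $s_{\min}$ of $s(\La)$ over all lifts $(\La,\La')$ is attained on a lift where both $\La$ and $\La'$ are stable under $\Aut(\cC)$, and the corresponding minimal class depends only on $\cC$. Hence each admissible $\cC$ determines a single well-defined value $s_{\min}(\cC) \ge n$, and the proposition is equivalent to the claim $s_{\min}(\cC) \in \{n\} \cup \{k : k \ge n+6\}$. Note that if $s_{\min}(\cC) = n$, the $n$ pairs of minimal vectors of the extremal lift must be exactly $\{\pm e_1,\dots,\pm e_n\}$: a well-rounded lattice requires $n$ linearly independent minimal vectors, and at $s = n$ no other vectors are available, so the first alternative of the statement is automatic.

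Next I split the verification by dimension. For $2 \le n \le 8$, I would run through Tableau~11.1 of M1, which tabulates all admissible $\Z/d\Z$-codes together with the invariants of their $\Aut(\cC)$-invariant minimal lifts. For $n = 9$ I would run through Tables~2--10 of K-M-S, which give the analogous enumeration. For each row I would read the recorded value of $s_{\min}(\cC)$ and confirm that it is either $n$ or at least $n+6$.

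The main obstacle is the sheer bulk of the case analysis, in particular in dimension~$9$, where the enumeration of admissible codes in K-M-S itself relies on non-trivial linear programming computations implemented in \textsf{MAGMA}. Conditional on the correctness of those tables, the present proposition is a mechanical row-by-row check that the gap $\{n+1,n+2,n+3,n+4,n+5\}$ in possible values of $s_{\min}(\cC)$ is systematically empty for every admissible code in the listed dimensions. One may reassure oneself that the gap is at least plausible by comparing with Theorem~\ref{th(n+s)}, which shows that in the regime $s \le n+5$ the perfection rank coincides with $s$, a rigid constraint that such low-kissing-number lifts would have to satisfy.
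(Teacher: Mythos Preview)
Your proposal is correct and follows essentially the same approach as the paper: the paper's proof is precisely the one-line remark ``By inspection of the tables of \cite{M1} (Tableau~11.1) and \cite{K-M-S} (Tables~2--10)'' preceding the statement, and you have reproduced this with the additional (and correct) observation that the averaging principle reduces the check to the single invariant $s_{\min}(\cC)$. Your justification that $s_{\min}(\cC)=n$ forces $m(\La)=\{\pm e_i\}$ is a helpful clarification that the paper leaves implicit.
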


Before going further we give some more precise results 
on the index theory. The most useful tool 
is {\em Watson's identity}, relying vectors $e_1,\dots,e_n$ 
of a basis for $E$, a vector $e=\frac{a_1 e_1+\dots+a_n e_n}d$, 
and the vectors $e'_i:=e-\sgn(a_i)e_i\,$: 
\[\left(\Big(\sum_{i=1}^n|a_i|\Big)-2d\right)N(e)=
\sum_{i=1}^n|a_i|\big(N(e'_i)-N(e_i)\big)\,.\]

Applied to minimal vectors $e_i$ of a lattice $\La=\lan e_i,e\ran$, 
this proves the lower bound $\sum\,\abs{a_i}\ge 2d$, 
and moreover shows that the vectors $e'_i$ for which $a_i\ne 0$ 
are minimal whenever $\sum\,\abs{a_i}=2d$. 

%%Example 2.2 
\begin{example} \label{exad=3} 
Let $n=6$, $d=3$ and let $(e_1,\dots,e_6)$ be a basis 
for $E$ with $N(e_i)=1$ and constant scalar products 
$e_i\cdot e_j=t$. Let 
%\linebreak 
$\La=\lan e_1,\dots,e_6,e\ran$ where $e=\frac{e_1+\dots+e_6}3$. 
Then for $\frac 1{10}<t<\frac 14$ (e.g., $t=\frac 15$), 
we have $\min\La=1$, $m(\La)=\{\pm e_i,\pm e'_i\}$, and the perfection 
relations are proportional to $\sum p_{e_i}=\sum p_{e'_i}$, 
so that $s=12=n+6$ and $r=11=s-1$. This is a consequence of the fact that we 
 can associate  
canonically a perfection relation with every 
Watson identity \cite{BergeMartinet}(Proposition 2.5).
\end{example} 

\subsubsection{Primitive minimal classes (or cells).} 

Let $\cC$ be a minimal class. For every lattice $\La\in\cC$, 
$\wdt\La=\La\perp m\Z$, where $m=\min\La$, is a lattice in $E\times\R$, 
which defines a minimal class $\cC'=\cC+\Z$ of dimension $n+1$, 
containing all direct sums $\La\oplus m\Z$ close enough to $\wdt\La$. 
We say that $\cC$ is {\em primitive} if it does not extend 
by this process a class of dimension~$n-1$. 

\smallskip 

Among $n$-dimensional (well-rounded) minimal classes, that of $\Z^n$, 
which has $\imath=1$ and $s-n=0$, plays a special role. Indeed,  
let $\cC$ be a minimal class and let $e_1,\dots,e_n$ be independent
minimal vectors of some lattice $\La\in\cC$, and let $\La'\subset\La$ 
be the lattice with basis $(e_1,\dots,e_n)$. 
Let $I_1\subset\{1,\dots,n\}$ be the support of the code defined by 
$(\La,\La')$ ($I_1=\emptyset$ if $\La=\La'$) and let $I_2$ 
be the set of subscripts which occur as components of minimal vectors 
distinct from the $\pm e_i$ ($I_2=\emptyset$ if $s(\La)=n$). 
Set $I=I_1\cup I_2$ and $m=\abs{I}$. 
Then if $\cC\ne\cl(\Z^n)$, $I$ is not empty, so that $\cC$ 
extends a minimal class of dimension~$m\ge 2$. 

\smallskip 

To list minimal classes up to equivalence it suffices to list 
those which are primitive and then complete the list with those 
of the form $\cC'\oplus\Z$ for some class $\cC'$ of dimension $n-1$. 

\subsubsection{Classes with $s=n$.} 
%%Theorem 2.3 
\begin{thm} The numbers of minimal classes of well-rounded 
lattices with $s=n$ and $n\le 9$ having a given index $\imath$ 
are displayed in Table \ref{ResultQuotient}.
\end{thm}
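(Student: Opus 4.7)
The plan is to reduce the enumeration to the classification of admissible $\Z/d\Z$-codes carried out in \cite{M1} and \cite{K-M-S}, which has already been set up in the preceding paragraphs. The key observation is that the hypothesis $s=n$ forces us into the first alternative of Proposition \ref{prop(n+6)}: the minimal vectors of $\La$ are exactly $\{\pm e_1,\dots,\pm e_n\}$, a basis of $E$ after scaling. Any lattice $\La$ realising the minimal class then contains the sublattice $\La'=\lan e_1,\dots,e_n\ran$ with index $\imath=[\La:\La']=d$ equal to the modulus of the associated code, since otherwise Proposition \ref{prop(n+6)} would give $s(\La)\ge n+6$.

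Concretely, I would proceed as follows. First, for each $n\le 9$ and each divisor $d$ appearing in the classification, take the list of admissible $\Z/d\Z$-codes of length $n$ from Tableau 11.1 of \cite{M1} (for $n\le 8$) and from Tables 2--10 of \cite{K-M-S} (for $n=9$). For each such code $\cC$ that lifts with $m(\La)=\{\pm e_i\}$ I retain one representative lattice, chosen $\Aut(\cC)$-invariant as explained after Proposition~8.5 of \cite{M1}, and record its index $\imath=d$. Second, I verify that distinct codes produce distinct minimal classes by computing the barycenter matrix from Proposition \ref{propbarycentre}: since the minimal vectors are just $\pm e_i$, the matrix $B=TT^{t}$ reduces to the identity on this basis, and the relevant $\GL_n(\Z)$-equivalence class is determined by how the generator $e=\tfrac1d\sum a_ie_i$ refines $\La'$, which is recorded by the code itself. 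Third, I separate the enumeration into primitive and non-primitive classes according to the construction $\cC\mapsto \cC\oplus\Z$, so that the totals in each dimension can be computed inductively from the primitive entries of lower dimensions.

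The only nontrivial ingredient beyond bookkeeping is checking, for each code, that the ``generic'' invariant lattice actually achieves $s=n$ rather than acquiring extra minimal vectors. This is precisely the content of the dichotomy of Proposition \ref{prop(n+6)}: the alternative $s\ge n+6$ corresponds to the codes that \emph{must} acquire additional minimal pairs (because of a Watson-type identity forcing extra short vectors, as illustrated by Example \ref{exad=3}), and these are ruled out. Thus the count in each $(n,\imath)$ slot is the number of admissible codes of length $n$ and modulus $\imath$ that appear in the ``first column'' of the aforementioned tables.

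The main obstacle is the faithful translation from the code tables of \cite{M1,K-M-S} into the $(n,\imath)$-indexed counts; in dimension $9$, where the original classification itself required linear programming on \textsf{MAGMA}, additional care is needed to ensure that no admissible code has been missed and that the separation between ``$s=n$'' and ``$s\ge n+6$'' lifts is performed correctly for every $d$. Once this is done, Table \ref{ResultQuotient} is obtained simply by tallying the surviving codes, and the identity $d_{n-1}\circ d_n=0$ from Section~\ref{ssec2.2} provides a consistency check on the resulting cellular data.
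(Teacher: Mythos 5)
Your proposal follows essentially the same path as the paper: reduce the count to admissible $\Z/d\Z$-codes and read off the classification from Tableau 11.1 of \cite{M1} and Tables 2--10 of \cite{K-M-S}, separating primitive classes from those obtained by the $\cC\mapsto\cC\oplus\Z$ construction. However, two points in your reasoning deserve comment.

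First, the bijection ``minimal classes with $s=n$ $\leftrightarrow$ index systems'' is justified more cleanly in the paper than in your proposal. The paper observes directly that when $s=n$, the sublattice $\La'$ generated by minimal vectors of $\La$ is unique, and moreover $\La'$ has a unique basis up to permutation and change of signs; hence the minimal class is determined by, and determines, the index system (i.e.~the code). You instead appeal to Proposition~\ref{propbarycentre} and claim that the barycenter matrix $B=TT^t$ ``reduces to the identity on this basis.'' That statement is off: $T$ must express the minimal vectors on a basis $\cB$ of $\La$ (not of $\La'$), so $|\det T|=\imath$ and $B$ is not the identity — it carries exactly the index/code data. The compensating clause you add afterward (``determined by how $e=\tfrac1d\sum a_ie_i$ refines $\La'$'') shows you sense the issue, but the argument as written is muddled, and the paper's uniqueness-of-sublattice observation is the crisper and more direct justification.

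Second, Proposition~\ref{prop(n+6)} is not really needed to ``force'' anything: given a lattice with $s=n$, the associated pair $(\La,\La')$ automatically has $m(\La)=\{\pm e_i\}$, which is what $s=n$ means. The role of Proposition~\ref{prop(n+6)} (and of the explicit bookkeeping in the paper's proof for $\imath=2,3,4,2^2$, before falling back to the tables for $\imath\ge 5$) is to decide which admissible codes actually admit such an $s=n$ lift. Your high-level description of that step is correct, but you omit the concrete combinatorics the paper carries out (e.g.~the weight conditions $w\ge 5$ on binary codes of type $2^2$, the counting of primitive cells for index $2,3,4$). Those details are what actually populate Table~\ref{ResultQuotient}; ``read from the tables'' is the right idea but conceals the nontrivial bookkeeping.
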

\begin{proof} 

The lattices $\La$ with $s=n$  contain a unique sublattice $\La'$ 
generated by minimal vectors of $\La$, and $\La'$ has a unique basis
up to permutation and changes of signs. Hence the classification 
of minimal classes coincide with the index classification. 
We mainly need to consider the results from \cite{M1} and \cite{K-M-S}.
According to the previous section, for index 2 (resp. 3) there
is one primitive class if $n \ge 5$ (resp. $n \ge  7$), hence $n-4$ (resp. $n-6$)
cells. Similarly for cyclic quotients of order 4 and $n \ge 9$ there are $n-4$
primitive cells, but only three if $n = 8$, one if $n = 7$ and none if $n \le 6$,
which for $n = 7, 8, 9$ yields 1, 4, and 9 cells of cyclic type with $\imath = 4$.
In the case $\imath=4$ but with $\La/\La'$ of type $2^2$ (i.e. $\imath=2^2$), the condition is that 
 the corresponding binary code must be of weight $w \ge  5$. This implies $\ell \ge 8$, and if $\ell = 8$ (resp.
$\ell = 9$), we are left with one code, with weight distribution $(5^2 , 6)$ (resp.
three codes, with weight distributions $(5^2 , 8)$, $(5, 6, 7)$ and $(6^3 )$). 
As a result we got four primitive cells.
Finally we just have to read in the tables of \cite{M1}(Tableau 11.1) and \cite{K-M-S}(Tables 2-10) 
for index $\imath \ge 5$  and $n\le 9$ for all  admissible codes for which $s=n$ is possible.
\end{proof} 
{\small 
\begin{table}[h]
\begin{tabular}{cccccccccccc}
\toprule
$n\backslash \imath$&  $1$  &  $2$  &  $3$  &   $4$  &  $2^2$  &  $5$  &  $6$  &   
$7$  &  $8$  &  $4\cdot 2$  & \# minimal classes \\ 
\midrule
$\le 4$ &  $1$  &  $0$  &   $0$  &  $0$  &  $0$  &  $0$  &   $0$  &
$0$  &  $0$  &  $0$ & $1$ \\ 
%\hline 
$5$   &  $1$  &  $1$  &   $0$  &  $0$  &  $0$  &  $0$  &   $0$  & 
$0$  &  $0$  &  $0$  & $2$ \\ 
%\hline
$6$   &  $1$  &  $2$  &   $0$  &  $0$  &  $0$  &  $0$  &   $0$  & 
$0$  &  $0$  &  $0$  & $3$ \\ 
%\hline 
$7$   &  $1$  &  $3$  &   $1$  &  $1$  &  $0$  &  $0$  &   $0$  & 
$0$  &  $0$  &  $0$  & $6$ \\ 
%\hline 
$8$   &  $1$  &  $4$  &   $2$  &  $4$  &  $1$  &  $1$  &   $0$  & 
$0$  &  $0$  &  $0$  & $13$ \\ 
%\hline 
$9$   &  $1$  &  $5$  &   $3$  &  $9$  &  $4$  &  $4$  &   $9$  & 
$3$  &  $3$  &  $3$  & $44$ \\ 
%\hline 
\bottomrule
\end{tabular}
\quad\\[3pt]
\caption{Number of minimal classes of well-rounded lattices with $s=n$ and $n\leq 9$ according to the quotient.}\label{ResultQuotient}
\end{table}} 

To deal with slightly larger values of $s$, we first establish 
an as short as possible list of {\em a priori} possible index systems, 
then test for minimal equivalence the putative classes obtained 
from this list, and finally construct explicitly lattices 
in the putative class or prove that such a class does not exist. 
This third step is the more difficult and it is hopeless to deal with  
relatively large values of $s$ or of $n$ without using efficient 
linear programming methods, as seen in \S\ref{dutour}.

%Work in progress in this direction 
%by Dutour Sikiri\'c et al. was announced in \cite{D}.

\subsection{Lattices with $s=1$ or $s=2$.} 

%\subsubsection{More on the index.} 
Consider a lattice $\La_0$ having a basis of minimal vectors 
$\cB=(e_1,\dots,e_n)$ and $t=s-n$ other minimal vectors 
$x_i=\sum_{i=1}^n\,a_{i,j} e_j$, {\small $1\le j\le t$}. 
The $m\times m$ determinants ($m\le\min(t,n)$) 
extracted from the matrix $(a_{i,j})$ are the 
{\em characteristic determinants} of Korkine and Zolotarev (cf. Chapter 6 from \cite{martinet}). 
With every characteristic determinant $d\ne 0$ they associate 
a lattice $\La'_0$ of index $\abs{d}$ in~$\La_0$. 
This shows that if $\imath(\La_0)=1$, then $d=0$ or $\pm 1$; 
in particular all $a_{i,j}$ are $0$ or $\pm 1$.

\section{Enumeration of configurations of vectors}\label{sec4}\label{dutour}

In this section we explain the algorithms used for enumerating the configurations of vectors
in dimension $n\in \{8, \dots, 12\}$ and rank $r=n$, $n+1$ or above.

Our approach computes first the configurations of vectors in rank $r=n$ and then from this enumeration
gets the configurations in rank $r=n+1$, then $n+2$ and so on.
The number of cases explodes with the dimension $n$ and rank $r$ as one expects and the computation is
thus quite slow. However, for the case $r=n$ an additional problem occurs: the known bounds  on the
determinant of vector configurations are suboptimal.
All  computations rely on the ability to test if a configuration of shortest vectors of a positive definite matrix can be derived from a given configuration of vectors.

\subsection{Testing realizability of vector family}

In \cite{K-M-S} an algorithm for testing realizability of vector families by solving linear programs was introduced. We describe below the needed improvements of our strategy in order to reach higher dimensions. We refer to \cite{schrijver} for an account of the classical theory of linear programming.

Given $m$ affine functions $(\phi_i)_{1\leq i\leq m}$ on $\RR^n$ and another function $\phi$
the {\em linear program} is to minimize $\phi(x)$ for $x$ subject to the constraints $\phi_i(x)\geq 0$.
Define ${\mathcal P} = \{x\in \RR^n, \mbox{~s.t.}\;\phi_i(x)\geq 0\}$. 
The linear program is called {\em feasible} if ${\mathcal P}\not= \emptyset$ and  the elements of ${\mathcal P}$
are called \emph{feasible solutions}.
If $\phi$ is bounded from below on ${\mathcal P}$ then the inferior limit  is denoted $opt({\mathcal P}, \phi)$
and is attained by one feasible solution. Any feasible solution will satisfy
$\phi(x) \geq opt({\mathcal P}, \phi)$

The {\em dual problem} is to maximize the value of $m$ such that there exist $\beta_i$ with
\begin{equation*}
\phi = m + \sum_{i=1}^m \beta_i \phi_i \mbox{~with~} \beta_i\geq 0
\end{equation*}

Any feasible solution of the dual problem will gives us $\phi(x) \geq m$ and the maximum value of
such $m$ will be exactly $opt({\mathcal P}, \phi)$ by a theorem of von Neumann \cite{schrijver}(\S7.4).
In other words, any feasible solution $(m, \beta_i)$ of the dual problem will give us $opt({\mathcal P}, \phi)$.

Let $A$ be an $n \times n$ matrix with real coefficients and set $A[v]:=v^t A v$ for any $v \in \R^n$.  Given a configuration of vectors ${\mathcal V}$ the basic linear program to be considered is
\begin{equation*}
  \begin{array}{rl}
    \mbox{minimize} & \lambda\\
    \mbox{with}     & \lambda=A[v] \mbox{~for~} v\in {\mathcal V}\\
    & A[v] \geq 1 \mbox{~for~} v\in \Z^n - \{0\} - {\mathcal V}
  \end{array}
\end{equation*}

If the optimal value satisfies $\lambda_{opt} < 1$ then ${\mathcal V}$ is realizable, otherwise no.

The main issue is that the above linear program has an infinity of defining inequalities and so instead we consider the program restricted to a finite subset, i.e. the linear inequalities:
\begin{equation*}
A[v] \geq 1 \mbox{~for~} v\in {\mathcal S} \mbox{~with~}
\mbox{${\mathcal S}$~finite~and~${\mathcal S}\subset \Z^n - \{0\} - {\mathcal V}$}.
\end{equation*}

It can happen that the equalities $\lambda=A[v]$ for $v\in {\mathcal V}$ has no solution with $\lambda\neq 0$.
In that case ${\mathcal V}$ is not realizable.

It can also happen that the linear program is unbounded that is solutions with arbitrarily negative value of $\lambda$ are feasible.
In that case we append $2 {\mathcal V}$ to ${\mathcal S}$.

Thus if those restrictions are implemented then the linear program has an optimal rational solution $A_{opt}({\mathcal S})$ 
of optimal value $\lambda_{opt}({\mathcal S})$.

According to the solution of the linear program we can derive following conclusions:
\begin{enumerate}
\item If $\lambda_{opt}({\mathcal S}) \geq 1$ then we can conclude that the vector configuration is not realizable.
\item If $\lambda_{opt}({\mathcal S}) < 1$ and $A_{opt}({\mathcal S})$ is positive definite then we 
compute $\Min(A_{opt}({\mathcal S}))$.
  \begin{enumerate}
  \item If $\Min(A_{opt}({\mathcal S})) = {\mathcal V}$ then the configuration is realizable
  \item Otherwise we cannot conclude. But we can insert the vectors in the difference 
  $\Min(A_{opt}({\mathcal S}))  -  {\mathcal V}$ into ${\mathcal S}$ and iterate.
  \end{enumerate}
\item If $\lambda_{opt}({\mathcal S}) < 1$ and $A_{opt}({\mathcal S})$ is not of full rank then we can
compute some integer vector in the kernel of $A_{opt}({\mathcal S})$ and insert them into ${\mathcal S}$ and iterate.
\item If $\lambda_{opt}({\mathcal S}) < 1$ and $A_{opt}({\mathcal S})$ is of full rank but not positive 
semidefinite then we can compute an integer vector $v$ such that $A_{opt}({\mathcal S}) [v] < \lambda_{opt}({\mathcal S})$
and insert it into ${\mathcal S}$ and iterate.
\end{enumerate}
Thus we can iterate until we obtain either feasibility of the vector configuration of unfeasibility. In practice a naive implementation of this algorithm can be very slow and we need to apply a number 
of improvements in order to get reasonable running time:
\begin{enumerate}
\item The dimension of the program is $n(n+1)/2 - r$ and this is quite large.
We can use symmetries in order to get smaller program.
Namely we compute the group of integral linear transformation preserving ${\mathcal V}$ and impose
that the matrix $A$ also satisfies this invariance.

\item Even after symmetry reduction the linear programs have many inequalities and are hard to solve.
In our implementation we use {\tt cdd} \cite{cdd} based on exact arithmetic and provides solutions of
the linear program and its dual in exact rational arithmetic.
However, {\tt cdd} uses the simplex algorithm and is very slow in some cases. Thus the idea is to use 
floating point arithmetic and the {\tt glpk} program \cite{glpk} which has better algorithm and can
solve linear programs in double precision.
From the approximate solution we can guess in most cases a feasible rational solution of the
linear program and its dual. If both gives the same value, then we have resolved our linear program.
If this approach fails, then we fall back to the more expensive in time {\tt cdd}.
In all cases, we only accept a solution if it has a corresponding dual solution.
%We only accept an approximate solution if we can derive a primal and dual solution. In any case of failure we fall back to ${\tt cdd}$.

\item If the matrix $A_{opt}({\mathcal S})$ is of full rank but not positive definite then there exists 
an eigenvector $w\in \R^n$ of eigenvalue $\alpha < 0$. We then use the sequence of vectors
\begin{equation*}
w^i = \left( Near(i w_1), Near(i w_2), \dots, Near(i w_n) \right).
\end{equation*}
with $Near(x)$ being the nearest integer to a real number $x$.
As $i$ increases $w^i$ approaches the direction of the vector $w$.
Thus there is an index $i_0$ such that $w^{i_0}\not= 0$ and $A_{opt}({\mathcal S}) [w^{i_0}] < \lambda_{opt}({\mathcal S})$
and this vector $w^{i_0}$ can be inserted into ${\mathcal S}$.
The problem is that in many cases the matrix $A_{opt}({\mathcal S})$ is very near to being positive definite
and the negative eigenvalue will be very small.
Thus we first try double precision with the {\tt Eigen} template matrix library \cite{eigenweb} for the computation
of the eigenvector and vector $w^{i_0}$.
If this fails then we use the arbitrary precision floating point library {\tt mpfr} \cite{mpfr},
still with {\tt Eigen} and progressively increase the number of digits until a solution is found.

\item An issue is for the initial vector set. In our implementation we set
\begin{equation*}
{\mathcal S} = {\mathcal V} \cup \left\lbrace x \pm e_i \pm e_j\mbox{~for~}x\in {\mathcal V} \mbox{~and~}1\leq i,j\leq n\right\rbrace - \{0\}
\end{equation*}
but there could be better choice for initial vector set.

\item We apply {\tt LLL} reduction \cite{cohenbook} to the vector configuration. Namely, we define a positive definite matrix
\begin{equation*}
A_{{\mathcal V}} = \sum_{v\in {\mathcal V}} v^t v
\end{equation*}
and apply the {\tt LLL} reduction to it in order to get a reduction matrix $P\in \GL_{n}(\ZZ)$.
The matrix $P\in \GL_n(\Z)$ is then used to reduce ${\mathcal V}$ as well.
The use of LLL reduction reduces the maximal size of the coefficients and dramatically reduces the number 
of iterations needed to get a result. Hence we use it systematically.
\end{enumerate}

When all those methods are implemented we manage to do the realizability tests in reasonable time.

A variant of the above mentioned realizability algorithm is to consider a family of vector ${\mathcal V}$
of rank $r$ and return a realizable configuration of vectors ${\mathcal W}$ of rank $r$ if it exists that
contains ${\mathcal V}$. It suffices in the case $\lambda_{opt}({\mathcal S}) < 1$ and $A_{opt}({\mathcal S})$
to distinguish between vectors of $\Min(A_{opt})$ that increases the rank and vectors that do not increase the
rank.

\subsection{Enumeration of configuration of vectors with $r=n$}
In \cite{K-M-S} it is proved that for a configuration $\{\pm v_1, \dots, \pm v_n\}$
of shortest vectors, we have
\begin{equation*}
\vert \det(v_1, \dots, v_n)\vert \le \sqrt{\gamma_n^n}
\end{equation*}
with $\gamma_n$ the Hermite constant in dimension $n$. As it turns out this upper
bound on the determinant is tight for dimension $n\leq 8$ but not in dimension $9$
and $10$. An additional problem is that $\gamma_n$ is known exactly only for $n\leq 8$
and $n=24$. Our strategy is thus to simply enumerate the vector configurations
up to the best upper bound that we have on the determinant.

For dimension $10$, combined with the known upper bound on $\gamma_{10}$ this
gives an upper bound of $59$ on the indexes of the relevant lattices \cite{K-M-S}.
If one uses the conjectured value of $\gamma_{10}$ then one gets $36$ as upper bound
in dimension $10$. It will turn out that the maximal possible determinant is $16$.

A key aspect of the enumeration is to enumerate first the cases where the quotient
$\ZZ^n / L$ has the structure of a prime cyclic group. This is important since if
a prime $p$ is unfeasible then any vector configurations with determinant divisible by $p$ 
is unfeasible as well. It is also important since our enumeration goes prime
by prime for composite determinants. For $d=p_1\times \dots \times p_r$ we
first do the enumeration of vector configurations of determinant $p_1$, then $p_1p_2$ and so on.

In the case of lattices of index $p$ with $p$ prime we consider a lattice $L$ spanned by
$e_1=(1, 0,\dots, 0)$, \dots, $e_n = (0, \dots, 0, 1)$ and
\begin{equation*}
e_{n+1} = \frac{1}{p} (a_1, \dots, a_n), a_i\in \ZZ
\end{equation*}
such that $(e_1, \dots, e_n)$ is the configuration of shortest vectors of a lattice.
By standard reductions, we can assume that
\begin{itemize}
\item $a_1\leq a_2 \leq \dots \leq a_n$
\item and $1\leq a_i\leq \left\lfloor p/2\right\rfloor$.
\end{itemize}
Since $p$ is prime $e_{n+1}$ can be replaced by $k e_{n+1}$ for any $1\leq k\leq p-1$.
Thus we can assume the vector $e_{n+1}$ to be lexicographically minimal among all possible
vectors.

It turns out that lexicographically minimal vector configurations can be enumerated by
exhaustive enumeration without having to store in memory the list of candidates.
The idea is as follows: if $(a_1, \dots, a_n)$ is lexicographically minimal, then
$(a_1, \dots, a_{n-1})$ is also lexicographically minimal. Lexicographic minimality
only requires $(p-1)n$ multiplications and reductions to be tested. Thus we enumerate
all configuration up to length $n-1$ and then extend this enumeration to length $n$ by
adding all possible feasible candidates.
For $n=10$ and $p=59$ we have $16301164$ possible vector configurations and for each
of them we test realizability.

When the enumeration for index $p$ is done we can continue the enumeration up to index
$p p'$ by taking all feasible lattices of index $p$ and considering all their sublattices
of index $p'$ up to action of the symmetry group. Thus we get a set of \emph{a priori} feasible lattices
for which we can apply our realizability algorithm and get a list of lattices of index
$p p'$.
For $n=10$ the most complex case of this kind is $49 = 7^2$.

By doing prime by prime up to $59$ in this way we are able to get all configurations
of shortest vectors in dimension $10$, we find $283$ different lattices.
For dimension $n=11$, we were only able to go up to index $45$ and we got in total
$6674$ possible sublattices. The list is not proved to be complete but it is reasonable
to conjecture that this list is complete since the maximum determinant of a realizable
vector configuration is $32$.
For dimension $n=12$, we managed only to go up to index $30$ and found $454576$
different vector configurations and it seems that this list is far from complete.

\subsection{Enumeration of configuration of vectors with $r>n$}

For the case $r=n+1$ and $r=n+2$ we have proved in \cite{smoothness}
that the relevant cones are simplicial. In \cite{smoothness} an algorithm
is given for getting the full list of configuration vectors in those ranks.
The only improvement to this algorithm is that the integer points are
obtained by an exhaustive enumeration procedure since {\tt zsolve} proved
too slow.

For rank $r=n+3$ and above simpliciality is not \emph{a priori} true though
it is expected to hold in rank $n+3$ and $n+4$. Thus we need a different
approach to the enumeration.
If we have a configuration of vectors ${\mathcal V}'$
in dimension $n$ and rank $r>n+2$ then it necessarily contains a $n$-dimensional
configurations ${\mathcal V}$ of rank $r-2$ and two $n$-dimensional
configurations ${\mathcal W}_1$ and ${\mathcal W}_2$ of rank $r-1$ such that
\begin{equation*}
{\mathcal V} \subset {\mathcal W}_i \subset {\mathcal V}'.
\end{equation*}

\noindent Our approach is as follows:
\begin{enumerate}
\item We first enumerate the configurations of vectors in dimension $n$ and
  rank $r-2$ and $r-1$.
\item We determine all the orbits of pairs $({\mathcal V}, {\mathcal W})$ with
  ${\mathcal V}$ of dimension $n$ rank $r-2$, ${\mathcal W}$ of dimension $n$ rank $r-1$.
\item For any configuration ${\mathcal V}$ of dimension $n$ rank $r-2$ there is a finite number
  of configurations of vectors of dimension $n$ rank $r-1$ containing it. We can thus
  enumerate all the pairs $({\mathcal W}_1, {\mathcal W}_2)$ containing ${\mathcal V}$ and check
  if ${\mathcal W}_1\cup {\mathcal W}_2$ is contained in a realizable family of vectors.
\end{enumerate}

\subsection{Obtained enumeration results}

By combining all above enumerations methods, we can obtain a number of orbits of perfect domains for small $r$ and $n$.
\begin{prop}\label{OrbitCones_Perfect}
The number of orbits of cones in the perfect cone decomposition for rank $r\leq 12$ and dimension $n$ at
most $11$ (the result for $r=n=11$ is conjectural) are given in Table \ref{NrOrbitCones_Perfect}.
\end{prop}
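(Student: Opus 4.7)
The plan is to execute the enumeration algorithms of the preceding subsections and tabulate the resulting counts. First I would run the rank $r=n$ enumeration described in \S4.2: for each prime $p$ up to the best known upper bound on the determinant of a well-rounded configuration (coming from Hermite's constant $\gamma_n$ or its best known estimate), I would enumerate the lexicographically minimal candidate vectors $e_{n+1} = (a_1,\dots,a_n)/p$ with $1 \le a_1 \le \dots \le a_n \le \lfloor p/2 \rfloor$, exploiting that lexicographic minimality of a truncation is necessary for lexicographic minimality of the whole, so the enumeration can be done by backtracking without storing intermediate lists. For each candidate I apply the linear-programming realizability test of \S4.1, iterating the feasibility loop with the various acceleration techniques (symmetry reduction, double-precision solve via \texttt{glpk} with rational certificate, fallback to exact \texttt{cdd}, eigenvector rounding via \texttt{Eigen}/\texttt{mpfr} for the indefinite case, and systematic \texttt{LLL} reduction on the incumbent configuration).

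Once the prime-index configurations are classified, I would extend to composite indices $d = p_1 p_2 \cdots p_r$ by iteratively passing from the list of index-$(p_1 \cdots p_{i-1})$ realizable lattices to their sublattices of index $p_i$, reducing by the symmetry group at each step, and re-testing realizability. This yields the $r=n$ column of the table (in particular $283$ lattices for $n=10$, and the conjectural lists $6674$ and $454576$ for $n=11,12$, where completeness relies on the conjectured value of the relevant Hermite constant).

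Having the rank-$n$ data in hand, I would produce the $r=n+1$ and $r=n+2$ entries by invoking the simpliciality of the corresponding cones proved in \cite{smoothness}, replacing the \texttt{zsolve} integer-point step by an exhaustive enumeration as noted in \S4.3. For $r \ge n+3$, where simpliciality is not guaranteed, I would follow the three-step scheme of \S4.3: enumerate orbits of pairs $(\mathcal{V},\mathcal{W})$ with $\mathcal{V}$ of rank $r-2$ and $\mathcal{W} \supset \mathcal{V}$ of rank $r-1$; for each $\mathcal{V}$ list all pairs $(\mathcal{W}_1,\mathcal{W}_2)$ with $\mathcal{V} \subset \mathcal{W}_i$; and use the variant of the realizability algorithm (which, given a rank-$r$ seed, returns a containing realizable configuration of rank $r$ if one exists) to test whether $\mathcal{W}_1 \cup \mathcal{W}_2$ extends to a realizable family. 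Counting orbits under the $\GL_n(\Z)$-action then gives the entries of the table.

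The main obstacle is computational rather than conceptual: the combinatorial explosion in the number of candidate configurations as $n$ and $r$ grow, and the cost of the exact linear-programming feasibility tests. Two specific bottlenecks stand out. For $r=n$ in dimensions $10$--$12$, the enumeration must cover all indices up to the sharpest determinant bound available, and for $n=11,12$ the gap between the proven and conjectured bounds forces us either to push the enumeration further (which is out of reach) or to mark the result conjectural; this is precisely why the $r=n=11$ entry carries a caveat. For $r > n+2$, the number of pairs $(\mathcal{W}_1,\mathcal{W}_2)$ above a common $\mathcal{V}$ grows quickly, so aggressive orbit pruning via the automorphism group of $\mathcal{V}$ and early detection of non-realizability via the LP-dual certificate are essential to keep the search feasible within reasonable time.
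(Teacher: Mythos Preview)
Your proposal is correct and follows essentially the same approach as the paper: the proposition is a computational result, and the paper's ``proof'' consists precisely of running the algorithms of \S4.1--\S4.3 (realizability via iterated LP, prime-by-prime index enumeration for $r=n$, simpliciality from \cite{smoothness} for $r=n+1,n+2$, and the pair-extension scheme for $r\ge n+3$), which you have accurately reconstructed. One small correction: for $n=11$ the conjecturality is not due to reliance on a conjectured Hermite constant but simply to the enumeration having been pushed only to index $45$ (with maximal observed determinant $32$), and the $n=12$ count you mention is described in the paper as far from complete rather than conjecturally complete---but neither point affects the validity of your plan for the proposition as stated.
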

\enlargethispage*{1cm}
%Known results are collected in Table \ref{NrOrbitCones_Perfect}.
{\small
\begin{table}[h]
  \begin{tabular}{cccccccccc}
	\toprule
	$r\backslash n$
	& 4 & 5 & 6 & 7  & 8  & 9   & 10   &11    &12\\
	\midrule
	4         & 1 & 3 & 4 & 4  & 2  & 2   & 2    &-     &-\\
	
	5         &   & 2 & 5 & 10 & 16 & 23  & 25   &23    & 16\\
	
	6         &   &   & 3 & 10 & 28 & 71  & 162  &329   & 589\\
	
	7         &   &   &   & 6  & 28 & 115 & 467  &1882  & 7375\\
	
	8         &   &   &   &    & 13 & 106 & 783  &6167  & 50645\\
	
	9         &   &   &   &    &    & 44  & 759  &13437 &?\\
	
	10        &   &   &   &    &    &     & 283  &16062 &?\\
	
	11        &   &   &   &    &    &     &      &6674? &? \\
	\bottomrule
\end{tabular}
\quad\\[3pt]
\caption{Known number of orbits of cones in the perfect cone decomposition for rank $r\leq 12$ and dimension $n$ at
most $11$ (\emph{the result for $r=n=11$ is conjectural}).}
\label{NrOrbitCones_Perfect}
\end{table}
}

\begin{rem}
For dimension $n=11$, $12$ we do not have a full enumeration, however the partial configuration obtained are already
instructive. In all dimensions $n\leq 10$ the orientation of the well-rounded families of vectors with $s=n$ (i.e., the orientation of the associated cells of dimension $n$ in $X^*_n$) were found not to be preserved by their stabilizer.
However, this changes with $5$ well-rounded configurations known in dimension $11$ and $12$ in dimension $12$.
One such configuration in dimension $11$ is $e_{5}+e_{6}+e_{7}+e_{10}-e_{3}$, $e_{11}-e_{2}-e_{10}$, $e_{3}+e_{9}$,
$e_{4}+e_{6}+e_{8}-e_{2}-e_{9}$, $e_{1}+e_{2}+e_{3}$, $e_{4}+e_{7}+e_{11}-e_{6}$, $e_{8}+e_{9}-e_{7}$,
$e_{1}+e_{5}-e_{11}$, $e_{4}+e_{5}$, $e_{1}-e_{8}$, $e_{10}$ and it has a stabilizer of order $4$ which is the minimum
known so far. It seems reasonable to expect that there are well-rounded vector configurations with stabilizer of order $2$, i.e.~only antipodal operation.
We know just one well-rounded configuration in dimension $12$ whose orbit under $\GL_{12}(\ZZ)$ splits into two orbits
under $\SL_{12}(\ZZ)$.
One representative is $e_{6}-e_{1}-e_{2}-e_{3}-e_{4}$, $e_{6}-e_{7}-e_{8}$, $e_{9}-e_{3}-e_{6}-e_{10}-e_{11}-e_{12}$,
$e_{7}+e_{12}-e_{1}-e_{2}-e_{5}-e_{8}$, $e_{11}-e_{1}-e_{4}-e_{5}-e_{6}-e_{10}$, $e_{2}$, $e_{4}+e_{8}-e_{7}$, $e_{9}$,
$e_{10}$, $e_{3}+e_{5}+e_{7}+e_{9}-e_{1}$, $e_{11}$, $e_{12}$.
\end{rem}
%  \begin{tabular}{|c||c|c|c|c|c|c|c|c|c|}
%    \hline
%    &&&&&&&&&\\[-2ex]
%    $r\ \backslash d$
%              & 4 & 5 & 6 & 7  & 8  & 9   & 10   &11    &12\\
%    &&&&&&&&&\\[-2ex]
%    \hline
%    \hline
%    4         & 1 & 3 & 4 & 4  & 2  & 2   & 2    &-     &-\\
%    \hline
%    5         &   & 2 & 5 & 10 & 16 & 23  & 25   &23    & 16\\
%    \hline
%    6         &   &   & 3 & 10 & 28 & 71  & 162  &329   & 589\\
%    \hline
%    7         &   &   &   & 6  & 28 & 115 & 467  &1882  & 7375\\
%    \hline
%    8         &   &   &   &    & 13 & 106 & 783  &6167  & 50645\\
%    \hline
%    9         &   &   &   &    &    & 44  & 759  &13437 &?\\
%    \hline
%    10        &   &   &   &    &    &     & 283  &16062 &?\\
%    \hline
%    11        &   &   &   &    &    &     &      &6674? &? \\
%    \hline
%\end{tabular}

\begin{rem} The above data for $r\leq 7$ recover the computations of \cite{EGS} (cf.~Figures 1 and 2) and \cite{LS}.
\end{rem}

\section{Homology of the Voronoi complexes}\label{sec5}
\subsection{Preliminaries}
Recall the following simple fact, cf.~p.602 of \cite{EGS}, which are relevant for understanding the action of $\GL_N(\Z)$ on $X_N$:
\begin{lem}\label{lemma1}\label{lemma2}
\begin{itemize}
\item Assume that $p$ is a prime and $g \in {\GL}_N ({\mathbb R})$ has order $p$. Then $p \leq N+1$.

\item The action of ${\GL}_N ({\mathbb R})$ on the symmetric space $X_N$ preserves its orientation if and only if $N$ is odd.

\end{itemize}
\end{lem}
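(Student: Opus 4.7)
The plan is to handle the two parts of the lemma separately, each by a short direct computation.

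For Part (1), I first note that the statement as written with $g \in GL_N(\R)$ admits obvious counterexamples (any rotation by $2\pi/p$ lies in $GL_2(\R)$), so the intended hypothesis must be $g \in GL_N(\Z)$, consistent with the application to the finite stabilizer $\Gamma_\sigma \subset GL_N(\Z)$. Granted this, the key fact is the irreducibility over $\Q$ of the cyclotomic polynomial $\Phi_p(x)$. Since $g^p = 1$ and $g \ne 1$, the minimal polynomial of $g$ over $\Q$ divides $x^p - 1 = (x-1)\Phi_p(x)$ but differs from $x-1$, so $\Phi_p$ must divide it. Because the minimal polynomial in turn divides the characteristic polynomial (of degree $N$), we conclude $p - 1 = \deg \Phi_p \le N$, hence $p \le N+1$.

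For Part (2), the action of $g \in GL_N(\R)$ on $C_N$ is the restriction of the linear map $\rho(g)\colon h \mapsto g^t h\, g$ on $\mathrm{Sym}_N(\R)$. The homomorphism $g \mapsto \det \rho(g)$ is an algebraic character of $GL_N(\R)$ and hence a power of $\det$. Evaluating on diagonal $g = \mathrm{diag}(\lambda_1, \dots, \lambda_N)$ is straightforward: the coordinate $h_{ii}$ is scaled by $\lambda_i^2$ and $h_{ij}$ (for $i<j$) by $\lambda_i \lambda_j$, and the product of all these eigenvalues collapses to $(\det g)^{N+1}$ since each $\lambda_k$ appears $N+1$ times. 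As $X_N$ is the quotient of $C_N$ by the orientation-preserving scaling action of $\R_{>0}$, $g$ preserves the orientation of $X_N$ if and only if $(\det g)^{N+1} > 0$, which holds for every $g \in GL_N(\R)$ precisely when $N+1$ is even, i.e.\ when $N$ is odd.

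Both arguments are routine, so there is no substantial obstacle. The only points that demand care are the interpretation of the hypothesis in Part (1) — it must refer to integral $g$ — and, in Part (2), the verification that quotienting $C_N$ by the positive scalars does not affect the orientation sign, which is automatic since the $\R_{>0}$-action has positive Jacobian.
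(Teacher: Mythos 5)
The paper itself does not supply a proof: it records the lemma as a ``simple fact'' with a pointer to p.~602 of \cite{EGS}, so there is no in-text argument to compare yours against. Judged on its own terms, your proof is correct, and the route you take (cyclotomic irreducibility for the first part, the determinant of $h \mapsto g^t h g$ for the second) is the standard one. You are also right that the stated hypothesis $g \in \GL_N(\R)$ is a typo; the surrounding text (the finite stabilizers $\Gamma_\sigma \subset \GL_N(\Z)$ and the subsequent remark on torsion in $\GL_N(\Z)$) makes clear that $\GL_N(\Z)$ is intended, and your rotation-by-$2\pi/p$ example correctly shows the real statement is false. In Part (1), once $g\neq 1$, the minimal polynomial of $g$ over $\Q$ divides $x^p-1=(x-1)\Phi_p(x)$ and is not $x-1$, so by irreducibility of $\Phi_p$ over $\Q$ it must be $\Phi_p$ or $(x-1)\Phi_p$; either way $\deg \geq p-1 \leq N$. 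Note the same argument gives the bound for any $g\in\GL_N(\Q)$, and works prime-by-prime for finite stabilizers. In Part (2), the count $\det\rho(g)=(\det g)^{N+1}$ is correct (each $\lambda_k$ occurs twice from $h_{kk}$ and once from each of the $N-1$ entries $h_{kj}$, $j\neq k$). The one place that deserves a slightly sharper sentence is the descent to $X_N$: what makes it ``automatic'' is not merely that the $\R_{>0}$-action has positive Jacobian, but that $\rho(g)$ carries the canonical generator $h$ of the vertical line $\R h$ at $h$ to the canonical generator $g^t h g$ of the vertical line at $g^t h g$, so $\rho(g)$ is orientation-preserving on the vertical bundle, and hence the sign of its action on the quotient $X_N$ agrees with the sign $(\det g)^{N+1}$ on $C_N$. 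With that clarification the proof is complete.
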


\begin{rem}
We can give a more precise statement regarding the torsion in $\GL_N(\Z)$. Let $p$ an odd prime and $k$ a positive integer.
Set $\psi(p^k) = \varphi(p^k)$, $\psi(2^k) = \varphi(2^k)$ if $k>1$ and set $\psi(2)=\psi(1)=0$.
For an arbitrary $m = \prod_{\alpha} p_{\alpha}^{k_{\alpha}}$ define $\psi(m) = \sum_{\alpha} \psi(p_{\alpha}^{k_{\alpha}})$.
According to the crystallographic restriction theorem, an elementary proof of which is given as Theorem 2.7 of \cite{KPCR}, an element of order $m$ occurs in $\GL_{N}(\ZZ)$ if and only if $\psi(m)\leq N$. 
\end{rem}
\subsection{The Voronoi complexes in low dimensions} \label{homology_voronoi}
From the computations of \S\ref{sec4}, we deduce the following cardinalities.

\begin{prop} The number of low dimensional cells in the quotient $(X^*_N,\partial X^*_N)/\GL_N(\Z)$
is given by Table~\ref{tableBeyond8}.
\end{prop}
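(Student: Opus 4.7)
The plan is to reduce the count to an enumeration of $\GL_N(\Z)$-orbits of configurations of vectors and to invoke the two complementary algorithmic strategies developed in Sections~\ref{sec3} and~\ref{sec4}. Recall from \S\ref{ssec2.1} that an $n$-dimensional cell of $X_N^*$ meeting $X_N$ is the image in $X_N^*$ of the cone spanned by the rank-one forms $\hat v$ for $v$ in the set $m(\sigma)$ of minimal vectors of any form in its relative interior, and that the dimension $n$ of $\sigma$ coincides with the perfection rank of $m(\sigma)$ in the sense of Definition~\ref{defpef}. Hence counting $\GL_N(\Z)$-orbits of $n$-cells in $(X_N^*,\partial X_N^*)/\GL_N(\Z)$ that meet $X_N$ is the same as counting $\GL_N(\Z)$-orbits of realizable vector configurations $\mathcal V \subset \Z^N \setminus \{0\}$ of perfection rank $n$.

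I would then split the count into two regimes. For the deepest cells, of perfection rank $n = N$, the minimal-class classification of \S\ref{martinet} via Watson's index theory and the $\Z/d\Z$-codes of \cite{M1,K-M-S} supplies the count: Table~\ref{ResultQuotient} already records these numbers for $N \leq 9$. For perfection ranks $n > N$, I would build on Proposition~\ref{OrbitCones_Perfect}: starting from orbits of lower rank and adjoining vectors in all admissible ways, the realizability algorithm of \S\ref{dutour} (LP-duality with exact rational certification, combined with LLL reduction) decides each candidate, and orbits under the integral stabilizer are identified by the canonical-form bookkeeping used there. The entries of Table~\ref{tableBeyond8} are then read off as the output of this enumeration.

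The main obstacle will be proving that the enumeration is \emph{exhaustive} rather than producing a mere lower bound. Two bookkeeping requirements secure this. First, the search over $\Z/d\Z$-codes in the base rank $n = N$ must be cut off using the Hermite-type bound $|\det(v_1,\dots,v_N)| \leq \gamma_N^{N/2}$, or the best available upper bound on $\gamma_N$ when the exact value is unknown, as is the case for $N \geq 9$. Second, every declaration of infeasibility in the linear program must be certified by a feasible dual solution with matching optimum, so that no spurious case is eliminated. Once these are in place the table follows; the remaining open cases (notably $r = n = 11$ and dimension $12$) are precisely those for which exhaustiveness has not been established within the available computational resources, and the corresponding entries of Table~\ref{tableBeyond8} would need to be flagged as conjectural.
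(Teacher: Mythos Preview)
Your proposal is essentially the paper's own argument: the paper's proof is the single line ``From the computations of \S\ref{sec4}, we deduce the following cardinalities,'' and the values for $\Sigma_n^\star$ are literally those already tabulated in Proposition~\ref{OrbitCones_Perfect}. Your expansion of what lies behind that sentence (minimal classes and index theory for rank $r=N$, then the LP-based build-up to higher ranks) is accurate and matches \S\ref{sec3}--\S\ref{sec4}.

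One omission: Table~\ref{tableBeyond8} records not only $\Sigma_n^\star$ but also $\Sigma_n$, and the content of the latter rows is that \emph{every} cell found has an orientation-reversing element in its stabilizer. Your write-up never addresses this step. It is computationally trivial once the orbits and their stabilizers are in hand, but it is a separate verification from the enumeration itself, and it is precisely what drives Theorem~\ref{fact3}. You should mention that for each representative $\sigma$ one computes the finite group $\Gamma_\sigma$ and checks the sign of its action on $\R(\sigma)$.

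A small inaccuracy at the end: the entries of Table~\ref{tableBeyond8} marked ``?'' are simply not computed, not conjectural values awaiting certification; and $N=11$ does not appear in Table~\ref{tableBeyond8} at all (it appears only in Table~\ref{NrOrbitCones_Perfect}, where the $r=n=11$ entry is indeed flagged).
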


\begin{table}[h]
		\begin{tabular}{cccccc}
		\toprule
		$n$
		& 8 & 9 & 10 & 11  & 12 \\
		\midrule
		$\Sigma_n^\star\big(\GL_8(\Z)\big)$ & 13 & 106 & 783  &6167  & 50645\\
		$\Sigma_n\, \big(\GL_8(\Z)\big)$ & 0 & 0& 0& 0 & 0  \\
		$\Sigma_n^\star\big(\GL_9(\Z)\big)$ &   & 44  & 759  &13437 &?\\
		$\Sigma_n\,\big(\GL_9(\Z)\big)$ &  &  0&0 &0 & ?  \\
		$\Sigma_n^\star\big(\GL_{10}(\Z) \big)$ &  &   & 283  &16062 &?\\
		$\Sigma_n\,\big(\GL_{10}(\Z)\big)$ &  &  & 0 & 0 & ?\\
		\bottomrule
	\end{tabular}
	\caption{Cardinality of {$\Sigma_n$} and  {$\Sigma_n^\star$} for $N=8,9,10$ (empty slots denote zero).}
	\label{tableBeyond8}
\end{table}

The explicit data can be retrieved at the url \url{https://github.com/elbazvip/Voronoi-complexes-database}.
For convenience to the reader, we give below a set of representatives in the case $s=n=8$.

\begin{prop} A set of representatives for the 13 well-rounded cells of $\Sigma_8^\star\big(\GL_8(\Z)\big)$
is given by the following matrices:
{\renewcommand*{\arraystretch}{0.9}\setlength\arraycolsep{4pt}\fontsize{6.5}{6.5}\selectfont 
\begin{equation*}%\label{bottomcellsGL8}
\begin{gathered}
\begin{pmatrix}
0 &0 &0 &0 &0 &-1 &0 &0 \\ 
0 &-1 &0 &0 &0 &1 &0 &0 \\ 
0 &-1 &0 &1 &0 &1 &1 &0 \\ 
0 &0 &0 &1 &0 &1 &0 &0 \\ 
0 &-1 &-1 &1 &0 &0 &0 &0 \\ 
1 &1 &1 &0 &0 &0 &0 &0 \\ 
0 &2 &0 &-2 &0 &-1 &-1 &-1 \\ 
0 &-1 &-1 &1 &1 &1 &1 &1 
\end{pmatrix}
,
\begin{pmatrix}
0 &0 &1 &1 &1 &0 &0 &0 \\ 
0 &0 &-1 &-2 &-1 &0 &0 &0 \\ 
0 &0 &-2 &-2 &-2 &0 &-1 &0 \\ 
0 &0 &-1 &-1 &-1 &0 &0 &-1 \\ 
0 &-1 &-1 &-1 &-2 &0 &0 &0 \\ 
1 &1 &1 &1 &1 &0 &0 &0 \\ 
-1 &1 &3 &3 &3 &0 &1 &1 \\ 
0 &-1 &-2 &-2 &-2 &1 &0 &0 
\end{pmatrix}
,
\begin{pmatrix}
0 &1 &1 &0 &0 &0 &-1 &0 \\ 
0 &-1 &-2 &0 &0 &0 &1 &0 \\ 
0 &-1 &-2 &0 &0 &0 &1 &1 \\ 
0 &-1 &-1 &0 &0 &-1 &0 &0 \\ 
0 &-1 &-1 &0 &0 &1 &0 &0 \\ 
1 &1 &1 &0 &0 &0 &0 &0 \\ 
0 &1 &3 &0 &-1 &0 &0 &0 \\ 
-1 &-1 &-2 &1 &1 &0 &0 &0 
\end{pmatrix}
,\\
\begin{pmatrix}
0 &0 &1 &-1 &0 &0 &0 &-1 \\ 
0 &0 &-2 &1 &0 &0 &0 &1 \\ 
0 &-1 &-2 &1 &0 &1 &0 &1 \\ 
0 &-1 &-1 &0 &0 &0 &0 &1 \\ 
0 &-1 &-1 &1 &0 &0 &0 &0 \\ 
1 &1 &1 &0 &0 &0 &0 &0 \\ 
0 &2 &3 &-2 &0 &-1 &-1 &-2 \\ 
0 &-1 &-2 &1 &1 &1 &1 &1 
\end{pmatrix}
,
\begin{pmatrix}
-1 &-2 &0 &0 &1 &1 &1 &-1 \\ 
0 &1 &0 &0 &0 &-1 &-1 &0 \\ 
0 &1 &0 &0 &-1 &-1 &-1 &2 \\ 
0 &0 &1 &0 &0 &0 &0 &-1 \\ 
0 &-1 &0 &0 &0 &0 &1 &-1 \\ 
0 &1 &0 &0 &-1 &-1 &-2 &1 \\ 
0 &-1 &-1 &-1 &1 &1 &1 &0 \\ 
1 &1 &1 &1 &0 &0 &0 &0 
\end{pmatrix}
,
\begin{pmatrix}
0 &0 &0 &-1 &0 &0 &0 &1 \\ 
0 &-1 &0 &1 &0 &0 &1 &0 \\ 
0 &1 &0 &0 &-1 &0 &-1 &0 \\ 
1 &-1 &0 &1 &1 &0 &0 &0 \\ 
0 &1 &-1 &0 &0 &0 &0 &0 \\ 
-1 &1 &1 &0 &0 &0 &0 &0 \\ 
0 &2 &0 &-1 &-1 &-1 &-1 &0 \\ 
1 &-2 &0 &1 &1 &1 &1 &0 
\end{pmatrix}
,\\
\begin{pmatrix}
0 &0 &0 &0 &-1 &1 &0 &0 \\ 
0 &0 &1 &-1 &-1 &0 &0 &1 \\ 
0 &0 &0 &1 &1 &-1 &0 &-1 \\ 
0 &0 &0 &0 &0 &0 &1 &1 \\ 
0 &0 &-1 &0 &0 &-1 &0 &0 \\ 
1 &0 &1 &0 &0 &0 &0 &0 \\ 
0 &0 &1 &-1 &-1 &1 &0 &0 \\ 
0 &1 &-1 &1 &1 &0 &0 &0 
\end{pmatrix}
,
\begin{pmatrix}
-1 &1 &-1 &-2 &-1 &-1 &-2 &0 \\ 
0 &0 &1 &1 &0 &0 &1 &0 \\ 
-1 &0 &-1 &-1 &-1 &0 &-1 &1 \\ 
0 &-1 &0 &1 &1 &1 &2 &0 \\ 
1 &0 &2 &1 &1 &0 &1 &0 \\ 
0 &1 &0 &0 &0 &-1 &-1 &0 \\ 
0 &1 &-1 &-1 &-1 &-1 &-1 &0 \\ 
1 &0 &1 &1 &1 &1 &1 &0 
\end{pmatrix}
,
\begin{pmatrix}
1 &1 &-1 &-1 &-1 &0 &0 &1 \\ 
-1 &0 &1 &1 &1 &-1 &-1 &0 \\ 
0 &0 &-1 &0 &0 &1 &1 &0 \\ 
-1 &0 &1 &0 &0 &-1 &0 &0 \\ 
0 &-1 &1 &1 &0 &0 &0 &0 \\ 
0 &0 &0 &0 &1 &1 &0 &0 \\ 
1 &0 &-1 &-1 &0 &0 &0 &0 \\ 
1 &1 &0 &0 &0 &0 &0 &0 
\end{pmatrix}
,\\
\begin{pmatrix}
1 &1 &0 &0 &-1 &0 &0 &0 \\ 
-1 &0 &0 &-1 &1 &0 &-1 &0 \\ 
1 &-1 &0 &1 &0 &0 &0 &0 \\ 
-1 &0 &0 &-1 &0 &0 &-1 &1 \\ 
-1 &-1 &0 &0 &0 &1 &1 &0 \\ 
0 &-1 &-1 &0 &1 &0 &0 &0 \\ 
1 &1 &0 &1 &0 &0 &0 &0 \\ 
1 &1 &1 &0 &0 &0 &0 &0 
\end{pmatrix}
,
\begin{pmatrix}
0 &0 &0 &0 &0 &1 &0 &-1 \\ 
0 &0 &0 &1 &0 &1 &0 &1 \\ 
0 &0 &0 &-1 &0 &-1 &1 &0 \\ 
-1 &0 &1 &-1 &0 &-2 &0 &0 \\ 
0 &1 &-1 &0 &1 &1 &0 &0 \\ 
0 &-1 &1 &1 &0 &0 &0 &0 \\ 
0 &0 &1 &-1 &0 &-1 &0 &0 \\ 
1 &1 &-1 &1 &0 &1 &0 &0 
\end{pmatrix}
,
\begin{pmatrix}
0 &-1 &0 &0 &1 &0 &-1 &0 \\ 
0 &1 &-1 &0 &0 &0 &0 &1 \\ 
0 &-1 &1 &0 &0 &0 &1 &0 \\ 
1 &1 &-1 &0 &-1 &1 &0 &0 \\ 
-1 &-1 &1 &1 &0 &0 &0 &0 \\ 
-1 &0 &0 &0 &1 &0 &0 &0 \\ 
1 &2 &-2 &0 &0 &0 &0 &0 \\ 
0 &-1 &2 &0 &0 &0 &0 &0 
\end{pmatrix}
,\\
\begin{pmatrix}
0 &0 &0 &1 &0 &0 &0 &-1 \\ 
-1 &0 &0 &-1 &0 &0 &1 &0 \\ 
0 &0 &-1 &-1 &0 &0 &1 &0 \\ 
0 &0 &-1 &0 &0 &0 &1 &1 \\ 
0 &-1 &0 &-1 &0 &0 &0 &0 \\ 
1 &1 &1 &1 &0 &0 &0 &0 \\ 
0 &0 &1 &1 &0 &-1 &-2 &0 \\ 
0 &0 &-1 &-1 &1 &1 &1 &0 
\end{pmatrix}
.
\end{gathered}
\end{equation*}
}

\end{prop}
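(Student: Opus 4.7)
The statement asserts that thirteen explicit integer matrices give a complete set of representatives for the $\GL_8(\Z)$-orbits of well-rounded cells in $\Sigma_8^\star(\GL_8(\Z))$, i.e.\ for minimal classes of well-rounded rank $8$ lattices with kissing number $s=n=8$. The plan is to verify this in three steps: (i) each matrix determines a well-rounded configuration with $s=n=8$; (ii) the thirteen configurations lie in thirteen different $\GL_8(\Z)$-orbits; (iii) there is no fourteenth.

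For step (i), I would read off the columns of each matrix as a set $\cB=\{v_1,\dots,v_8\}\subset\Z^8$, verify $\det(v_1,\dots,v_8)\ne 0$, and then produce a positive definite $A$ realizing $\cB$ as its shortest vectors, using the realization subroutine of \S\ref{dutour}: starting from $A_\cB=\sum_i v_i v_i^{t}$, the feasibility linear program returns (in each of the 13 cases) an optimum $\lambda_{opt}<1$ with a positive definite $A_{opt}$ whose minimum vector set equals $\{\pm v_i\}$. This both confirms that $\cB\cup(-\cB)$ is a set of minimal vectors of a genuine perfect or non-perfect positive definite form, and pins down the minimal class.

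For step (ii), by Proposition \ref{propbarycentre} two lattices lie in the same minimal class iff their barycenter matrices $B=TT^{t}$ (with $T$ the $8\times 8$ column matrix of minimal-vector representatives) are $\GL_8(\Z)$-equivalent as symmetric integer forms. So I would compute $B_i=T_i T_i^{t}$ for $i=1,\dots,13$, record their $\GL_8(\Z)$-invariants (determinant, Smith normal form of any diagonal-off-diagonal reduction, orbits of short vectors, and, if necessary, an \textsf{ISOM}-type test \cite{PARI2}), and check pairwise non-equivalence. The barycenter matrices are small enough that isometry testing terminates quickly.

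For step (iii) — completeness, which is the main obstacle — I would invoke the index stratification developed in \S\ref{martinet}. Since $s=n=8$, every such lattice $\La$ admits a unique sublattice $\La'$ generated by $8$ minimal vectors, with quotient $\La/\La'$ an abelian group of index $\imath\le \lfloor\gamma_8^{8/2}\rfloor=4^4=256$; by Proposition \ref{prop(n+6)} and the $n=8$ row of Table \ref{ResultQuotient}, the only admissible quotient structures in dimension $8$ with $s=n$ are $\imath\in\{1,2,3,4,2\times2,5\}$, contributing respectively $1,4,2,4,1,1$ primitive or non-primitive classes. The enumeration relies on Watson's identity $\bigl(\sum|a_i|-2d\bigr)N(e)=\sum|a_i|(N(e'_i)-N(e_i))$ to rule out higher indices (they force $s>n$ by Theorem \ref{th(n+s)} and Example \ref{exad=3}), and on the classification of admissible $\Z/d\Z$-codes of length $8$ from \cite{M1}(Tableau 11.1). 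The sum $1+4+2+4+1+1=13$ matches the cardinality and, together with step (ii), the thirteen matrices exhaust the list. The delicate part is checking that each admissible code actually lifts to a well-rounded lattice with $s=n$ and not merely to one with $s>n$; this is handled by the realizability algorithm of \S\ref{dutour} applied to each candidate. Once all three steps are complete, the matrices displayed form the asserted set of representatives.
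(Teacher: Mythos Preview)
Your three-step outline is correct and is essentially what the paper does, only spelled out in more detail. The paper's own justification is a single line: it points to the command \texttt{qfisom} in \textsf{PARI} to certify pairwise inequivalence of the thirteen configurations (your step (ii) via barycenter matrices is exactly this), and implicitly relies on the surrounding enumeration for realizability and completeness. One small difference: for completeness you appeal to the index stratification of \S\ref{martinet} and Table~\ref{ResultQuotient}, whereas the paper's preceding Proposition credits the count of $13$ to the linear-programming enumeration of \S\ref{dutour}; both routes are valid, are developed in the paper, and agree on the answer.
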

We can check the inequivalence of the above matrices  using the command \texttt{qfisom} from \textsf{PARI} \cite{PARI2}.
As all cells have their orientations changed, we deduce the following central result for the homology of the Voronoi complex.
\begin{thm}\label{homology_voronoiBeyond8}\label{fact3} 
 The  groups $H_k(\Vor_{\GL_N(\Z)})$ are zero modulo $\SC_2$ for $N=8$ and  $k \leq 12$, $N=9,10$ and $k<12$.
\end{thm}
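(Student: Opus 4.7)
The plan is to show directly that the Voronoi chain group $V_n$ of $\Vor_{\GL_N(\Z)}$ vanishes for every $n$ in the claimed range, from which the homology vanishing (on the nose, not merely modulo $\SC_2$) is automatic. Recall that $V_n$ is the free abelian group on $\Sigma_n(\GL_N(\Z))$, so I only need to check that this set is empty for every $n \leq 12$ when $N = 8$ and for every $n \leq 11$ when $N = 9, 10$.

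The first ingredient is a lower bound on the dimensions in which cells meeting $X_N$ can appear. Any cell $\sigma$ of $X_N^*$ that intersects $X_N$ corresponds to a cone in $C_N^*$ generated by rank-one forms $\hat{v}_i$ for vectors $v_i \in m(\sigma)$ whose linear span must be all of $\R^N$ (otherwise no element of the cone is positive definite). A quick linear-algebra check shows that if $v_1, \ldots, v_N \in \R^N$ are linearly independent then the symmetric matrices $v_1 v_1^t, \ldots, v_N v_N^t$ are linearly independent in $\mathrm{Sym}_N(\R)$, so the associated cone has dimension at least $N$. Consequently $\Sigma_n^\star(\GL_N(\Z))$, and \emph{a fortiori} $\Sigma_n(\GL_N(\Z))$, is empty for all $n < N$.

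The second ingredient is a direct appeal to Table \ref{tableBeyond8}, whose entries record that $\Sigma_n(\GL_N(\Z)) = \emptyset$ for $N = 8, n \in \{8,\ldots,12\}$, for $N = 9, n \in \{9,10,11\}$, and for $N = 10, n \in \{10,11\}$. Combining the two ingredients, the chain complex $\Vor_{\GL_N(\Z)}$ is identically zero throughout the stated range of degrees, hence its homology vanishes there; this in particular gives the vanishing modulo $\SC_2$.

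The main obstacle has already been overcome by the enumerations of \S\ref{sec3}--\S\ref{sec4}: producing Table \ref{tableBeyond8} required classifying all low-index well-rounded configurations up to $\GL_N(\Z)$-equivalence and verifying, in every case, that some element of the stabilizer reverses the orientation of the cell. Given that data, the proof of Theorem \ref{homology_voronoiBeyond8} reduces to the short inspection above.
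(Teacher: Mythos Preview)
Your proof is correct and follows essentially the same route as the paper: the paper's argument is the single sentence ``As all cells have their orientations changed, we deduce the following central result'', i.e.\ it reads off from Table~\ref{tableBeyond8} that $\Sigma_n(\GL_N(\Z))=\emptyset$ in the relevant range, whence $V_n=0$ and the homology vanishes. Your write-up adds the small but useful observation that cells meeting $X_N$ have perfection rank at least $N$, which covers the degrees $n<N$ not listed in the table; otherwise the arguments coincide.
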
 

\begin{rem} In the case $N=12$ we cannot prove so far that the list of 12-dimensional cells of $\Vor_{\GL_N(\Z)}$ is complete (even if heuristically it seems the case).\end{rem}

\section{Cohomology of modular groups}\label{sec6}

\subsection{Borel--Serre duality}\label{Farrellcohomology}

According to Borel and Serre, Thm.~11.4.4 and Thm.~11.5.1 of  \cite{BS}, the group $\Gamma=\SL_N(\Z)$ or $\GL_N(\Z)$
is a virtual duality group with dualizing module
\[
H^{v(N)} (\Gamma ; \Z [\Gamma]) = {\St}_N \otimes \tilde\Z \, ,
\]
where $v(N) = N(N-1)/2$ is the virtual cohomological dimension of $\Gamma$ and $\tilde\Z$ is the orientation module
of $X_N$. It follows that there is a long exact sequence
\begin{equation}\label{lesbos}\begin{tikzcd}
& \cdots \rar & H^{v(N)-n} (\Gamma ; \tilde\Z) \rar \ar[draw=none]{d}[name=X, anchor=center]{} & \hat H^{v(N)-n} (\Gamma , \tilde\Z)
\ar[rounded corners,
to path={ -- ([xshift=2ex]\tikztostart.east)
	|- (X.center) \tikztonodes
	-| ([xshift=-2ex]\tikztotarget.west)
	-- (\tikztotarget)}]{dll} \\
& H_{n-1} (\Gamma ; {\St}_N)  \rar & H^{v(N)-n+1} (\Gamma ; \tilde\Z) \rar & \cdots.
\end{tikzcd}\end{equation}
where $\hat H^*$ is the Farrell cohomology of $\Gamma$ \cite{FA}. 

%\begin{equation}\label{lesbos}
%\cdots \to H_n (\Gamma , {\rm St}_N) \to H^{v(N)-n} (\Gamma , \tilde\Z) \to \hat H^{v(N)-n} (\Gamma , \tilde\Z) \to H_{n-1} 
%(\Gamma , {\rm St}_N) \to \cdots
%\end{equation}

%\tr{(Note that $v(N)-n$ is allowed to be negative here.)}
From Lemma \ref{lemma1} and the Brown spectral sequence, X (4.1) of \cite{B}, we deduce that $\hat H^* (\Gamma , \tilde\Z)$
lies in ${\mathcal S}_{N+1}$. Therefore
\begin{equation}\label{BorelSerre}
H_n (\Gamma ; \mathrm{\St}_N) \equiv  H^{v(N)-n} (\Gamma ; \tilde\Z) \, , \ \mbox{modulo ${\mathcal S}_{N+1}$.}
\end{equation}
When $N$ is odd, then ${\rm GL}_N (\Z)$ is the product of ${\rm SL}_N (\Z)$ by $\Z / 2$, therefore
\[
H^m ({\rm GL}_N (\Z) ; \Z) \equiv H^m ({\rm SL}_N (\Z) ; \Z) \, , \ \mbox{modulo ${\mathcal S}_2$.}
\]
When $N$ is even, then the action of ${\rm GL}_N (\Z)$ on $\tilde\Z$ is given by the sign of 
the determinant (see Lemma \ref{lemma2}) and Shapiro's lemma gives
\begin{equation}\label{Shapiro}
H^m ({\rm SL}_N (\Z) , \Z) = H^m ({\rm GL}_N (\Z) , M) \, ,
\end{equation}
with 
\[
M = {\rm Ind}_{{\rm SL}_N (\Z)}^{{\rm GL}_N (\Z)} \, \Z \equiv \Z \oplus \tilde \Z \, , \ \mbox{modulo ${\mathcal S}_2$.}
\]

\subsection{Homology of modular groups with coefficients in the Steinberg module}

From \ref{homology_voronoiBeyond8} and  \ref{vorsteinberg}, we deduce
\begin{cor}\label{homology_GL_steinberg}
The groups $H_{k-N+1} (\GL_N(\Z) ; {\St}_N)$ are trivial modulo $\cS_{N+1}$ for $N=8,9,10$ and $k=8,\ldots,11$ (assuming $k\ge N$).
\end{cor}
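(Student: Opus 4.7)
The plan is to combine the two immediately preceding results, namely Theorem \ref{homology_voronoiBeyond8} and Proposition \ref{vorsteinberg}, in the most direct way possible. There is essentially no work beyond matching the ranges of indices, so I would present it as a short deduction rather than a separate argument.

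First I would invoke Proposition \ref{vorsteinberg} with $\Gamma = \GL_N(\Z)$ and $m = k$, which yields an isomorphism
\begin{equation*}
H_{k-N+1}\bigl(\GL_N(\Z); \St_N\bigr) \;\cong\; H_k\bigl(\Vor_{\GL_N(\Z)}\bigr) \quad \text{modulo } \cS_{N+1}.
\end{equation*}
Next I would apply Theorem \ref{homology_voronoiBeyond8}, which states that the right-hand side vanishes modulo $\cS_2$ for $N=8$ and $k \le 12$, and for $N = 9, 10$ and $k < 12$. Since any finite abelian group whose order has only the prime $2$ as a factor has order divisible only by primes $\le N+1$ (for $N \ge 1$), we have the inclusion $\cS_2 \subset \cS_{N+1}$, so vanishing modulo $\cS_2$ is \emph{a fortiori} vanishing modulo $\cS_{N+1}$.

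The only remaining check is that the range $k = 8, \ldots, 11$ with $k \ge N$ in the corollary indeed lies within the range covered by Theorem \ref{homology_voronoiBeyond8}: for $N = 8$ one needs $k \in \{8,9,10,11\} \subset \{k : k \le 12\}$; for $N = 9$ one needs $k \in \{9,10,11\} \subset \{k : k < 12\}$; and for $N = 10$ one needs $k \in \{10, 11\} \subset \{k : k < 12\}$. All three are satisfied, so the conclusion follows.

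Since both inputs are already established in the preceding sections, there is no real obstacle here; the role of this statement is simply to translate the cellular computation of Table \ref{tableBeyond8} (and the observation that every top-dimensional well-rounded cell in the relevant range has its orientation reversed by its stabilizer) into the desired statement about the homology of $\GL_N(\Z)$ with Steinberg coefficients, which will then be fed into the Borel--Serre duality machinery of \S\ref{Farrellcohomology} in the subsequent subsections.
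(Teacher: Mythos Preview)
Your proof is correct and matches the paper's own derivation exactly: the paper simply states that the corollary follows ``From \ref{homology_voronoiBeyond8} and \ref{vorsteinberg},'' and your write-up just unpacks that one-line deduction with the explicit index-range check and the trivial inclusion $\cS_2 \subset \cS_{N+1}$.
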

%\begin{rem} In \cite{miller2018stability}, they have shown that  $H_{1} (SL_N(\Z) , {St}_N)= 0$ for $N \ge 6$.
%\end{rem}

Adding this to the results of \cite{EGS}, one obtains Table \ref{VoronoiHomology}. 
%\begin{rem} The case $H_N(\Vor_{\GL_N(\Z)})$ can be recovered from Church and Putnam \cite{CP} modulo $\cS_{N+1}$.
%\end{rem}

\begin{table}
	\begin{tabular}{ccccccccccccc}
		\toprule
		$k\backslash N$
		& 0 & 1 & 2 & 3 & 4 & 5 & 6 & 7  & 8  & 9   & 10   &11  \\
		\midrule
		12    	& & &  &  &  &  &  & $\Z$  & ? & ?  & ?  & ? \\ 
		11   	& & &  &  &  &  &  &   & ? & ?  & ?  & ? \\ 
		10    	& & &  &  &  &  & $\Z^2$ &   & ? & ?  & ?  & ? \\ 
		9    	& & &  &  &  &  &  &   & ? & ?  & ?  & ? \\ 
		8   	& & &  &  &  &  &  &   & ? & ?  & ?  & ? \\ 
		7    	& & &  &  &  &  &  & $\Z$  & ? & ?  & ?  & ? \\ 
		6    	& & &  &  &  &  & $\Z$ & $\Z$  & ? & ?  & ?  & ? \\ 
		5    	& & &  &  &  & $\Z$ & $\Z$ &   &  & ?  & ?  & ? \\ 
		4    	& & &  &  &  &  &  &   &  & ?  & ?  & ? \\ 
		3   	& & &  & $\Z$ & $\Z$ &  &  &   &  &   & ?  & ? \\ 
		2    	& & &  &  &  &  &  &   &  &   &   & ? \\ 
		1    	& & &  &  &  &  &  &   &  &   &   & \\ 
		0      	& $\Z$ & $\Z$ &  &  &  &  &  &   &  &   &   & \\
		\bottomrule
	\end{tabular}
	\caption{The groups  $H_{k+N-1}(\Vor_{\GL_N(\Z)}) = H_k(\GL_N(\Z);\St_N)$ modulo $\cS_{N+1}$. Empty slots denote $0$.}
	\label{VoronoiHomology}
\end{table}

\subsection{Cohomology of modular groups}
When $\Gamma=\SL_N(\Z)$ or $\GL_N(\Z)$, we know $H^m(\Gamma;\tilde\Z)$ by 
combining \eqref{eq3} (end of \S\ref{ssec2.4}), Section \ref{homology_voronoi} and \eqref{BorelSerre}. As shown above, 
this allows us to compute the 
cohomology of $\Gamma$ with trivial coefficients. The results are given in Corollary \ref{cohomology} below.

\begin{cor}\label{cohomology} From Borel--Serre duality, we have
\[
 H^{\frac{N(N-1)}{2}-k} (\GL_N(\Z) ; \Z)=0 \mod \cS_{N+1}\, ,
\]
for $N=8,9,10,11$ and $0<k\leq 12-N$.
\end{cor}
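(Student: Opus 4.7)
The plan is to combine Corollary~\ref{homology_GL_steinberg} with Borel--Serre duality \eqref{BorelSerre}, and then convert from the orientation module $\tilde\Z$ to trivial coefficients via Shapiro's lemma \eqref{Shapiro}. In the first step, Corollary~\ref{homology_GL_steinberg} yields $H_k(\GL_N(\Z);\St_N) \equiv 0$ modulo $\cS_{N+1}$ in the relevant range, and Borel--Serre duality converts this into
\[
H^{v(N)-k}(\GL_N(\Z);\tilde\Z) \equiv 0 \pmod{\cS_{N+1}}.
\]

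For the odd values $N = 9, 11$, Lemma~\ref{lemma2} identifies $\tilde\Z$ with $\Z$ as $\GL_N(\Z)$-modules, so the vanishing above is already the statement of the corollary. For the even values $N = 8, 10$, I would apply the Shapiro decomposition, which is valid modulo $\cS_2$ and hence modulo $\cS_{N+1}$:
\[
H^m(\SL_N(\Z);\Z) \equiv H^m(\GL_N(\Z);\Z) \oplus H^m(\GL_N(\Z);\tilde\Z).
\]
The second summand vanishes by the first step, so it suffices to establish the analogous vanishing of $H^m(\SL_N(\Z);\Z)$ in the same range. Since $\SL_N(\Z)$ always acts orientation-preservingly on $X_N$, Borel--Serre for $\SL_N$ gives $H^{v(N)-k}(\SL_N(\Z);\Z) \equiv H_k(\SL_N(\Z);\St_N)$ modulo $\cS_{N+1}$, and the problem reduces to the corresponding vanishing in the $\SL_N$-Voronoi complex.

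The main obstacle is precisely this last reduction in the even cases $N = 8, 10$. The emptiness $\Sigma_n(\GL_N(\Z)) = \emptyset$ in the relevant range does not formally imply $\Sigma_n(\SL_N(\Z)) = \emptyset$: a $\GL_N$-orbit of cells may split into two $\SL_N$-orbits, and an orientation-reversing element of a $\GL_N$-stabilizer may carry determinant $-1$ and thus fail to lie in the $\SL_N$-stabilizer. Resolving this requires returning to the explicit stabilizer data underlying the enumerations of \S\ref{sec4} and checking, cell by cell in the relevant low range, that an orientation-reversing element of determinant $+1$ is always available. The case $N = 11$ is arithmetically simpler (no Shapiro step is needed, since $N$ is odd), but the input is only conjecturally complete, and the remark following Proposition~\ref{OrbitCones_Perfect} shows that $\Sigma_{11}(\GL_{11}(\Z))$ is nonempty in the relevant degree; the single required statement $H_{11}(\Vor_{\GL_{11}(\Z)}) \equiv 0$ modulo $\cS_{12}$ therefore also demands a concrete computation with the differentials attached to the enumerated cells of dimension $11$ and $12$.
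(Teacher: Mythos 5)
Your derivation correctly reproduces the paper's chain: vanishing of $H_*(\Vor_{\GL_N(\Z)})$ gives vanishing of $H_*(\GL_N(\Z);\St_N)$ via Proposition~\ref{vorsteinberg}, Borel--Serre duality \eqref{BorelSerre} translates this to $H^*(\GL_N(\Z);\tilde\Z)=0$, and the odd/even dichotomy together with Shapiro's lemma \eqref{Shapiro} is used to return to trivial coefficients. You are also right that the paper's one-line justification of Corollary~\ref{cohomology} does not spell out the step you flag for even $N$: Theorem~\ref{homology_voronoiBeyond8} concerns only $\Vor_{\GL_N(\Z)}$, so the chain above yields $H^*(\GL_N(\Z);\tilde\Z)=0$, while the Shapiro decomposition reduces $H^*(\GL_N(\Z);\Z)$ to $H^*(\SL_N(\Z);\Z)$, i.e.~to the $\SL_N$-Voronoi complex, for which no analogue of Theorem~\ref{homology_voronoiBeyond8} is stated. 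Your observation that a $\GL_N$-orbit of cells with orientation-reversing $\GL_N$-stabilizer need not give an $\SL_N$-orbit with orientation-reversing $\SL_N$-stabilizer is exactly the unverified point; it is not deducible formally from the $\GL_N$ data and would require the cell-by-cell stabilizer check you describe.

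For $N=11$, however, you are overcomplicating matters. The required input is $H_1(\GL_{11}(\Z);\St_{11})=0$ modulo $\cS_{12}$, and Remark~\ref{rem:cp} already supplies this through Church--Putman's presentation of $\St_N$, which yields $H_1(\GL_N(\Z);\St_N)=0$ modulo $\cS_N$ for all $N\ge 3$ (Miller--Patzt--Nagpal even give integral vanishing for $\SL_N$ with $N\ge 6$). Since $12$ is not prime, $\cS_{11}=\cS_{12}$, so the $N=11$ instance of the corollary follows with no reliance on the conjectural rank-$11$ Voronoi enumeration or on the differentials attached to the orientation-preserving cells you mention. The genuine loose end in both your proposal and the paper's terse argument is therefore confined to $N=8,10$, where one needs either the explicit $\SL_N$-stabilizer data or a stated $\SL_N$-analogue of the $\GL_N$-Voronoi vanishing.
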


\begin{rem} This provides further evidence for a conjecture of Church, Farb and Putman \cite{CFP}, see Conjecture 2.\end{rem}

\section{Application to algebraic K-theory of integers}\label{sec7}

The homology of the general linear group with coefficients in the Steinberg module can also be used to compute 
the $K$-theory of $\Z$. Let $P(\Z)$ be the exact category of free $\Z$-modules of finite rank, let $Q$  be the category obtained from $P(\Z)$ by applying Quillen's $Q$-construction \cite{QuillenI}, and let $BQ$ be its classifying space. Let $Q_N$ be the full subcategory of $Q$ containing all free $\Z$-modules of rank at most $N$ and $BQ_N$ its classifying space. 
One of the definitions of the algebraic $K$-theory groups \cite{QuillenI} is
\[
K_m (\Z) = \pi_{m+1} (BQ) \, , \quad m \geq 0 \, .
\]
Therefore we can compute $K_m (\Z)$ if we understand the homology of $BQ$ as well as the Hurewicz map
\[
h_m \colon K_m (\Z) \to H_{m+1} (BQ;\Z).
\]

To do so, we use that Quillen proved in Theorem 3 of \cite{Q3} that there are long exact sequences
\[\begin{gathered}\begin{tikzcd}
&[-5pt] \cdots \rar &[-5pt] H_m(BQ_N;\Z) \rar \ar[draw=none]{d}[name=X, anchor=center]{} &[-5pt] H_{m-N}(\GL_N(\Z);\St_N)
\ar[rounded corners,
to path={ -- ([xshift=2ex]\tikztostart.east)
	|- (X.center) \tikztonodes
	-| ([xshift=-2ex]\tikztotarget.west)
	-- (\tikztotarget)}]{dll} \\
& H_{m-1}(BQ_{N-1};\Z) \rar & H_{m-1}(BQ_N;\Z) \rar & \cdots.
\end{tikzcd}\end{gathered}\]
Since $BQ_0 \simeq \ast$, these allow us to inductively obtain information about the homology of $BQ_N$ from $H_*(\GL_N(\Z);\St_N)$.

\subsection{On the homology of $BQ$}
Using Proposition \ref{vorsteinberg}, we can rewrite the above Quillen sequences as
\begin{equation}\label{vorquillen}\begin{gathered}\begin{tikzcd}
&[-5pt] \cdots \rar &[-5pt] H_m(BQ_N;\Z) \rar \ar[draw=none]{d}[name=X, anchor=center]{} &[-5pt] H_{m-1}(\Vor_{\GL_N(\Z)})
\ar[rounded corners,
to path={ -- ([xshift=2ex]\tikztostart.east)
	|- (X.center) \tikztonodes
	-| ([xshift=-2ex]\tikztotarget.west)
	-- (\tikztotarget)}]{dll} \\
& H_{m-1}(BQ_{N-1};\Z) \rar & H_{m-1}(BQ_N;\Z) \rar & \cdots.
\end{tikzcd}\end{gathered}\end{equation}

We obtain the following result concerining the homology of $BQ$. It is proven using a spectral sequence, but a reader unfamiliar with spectral sequences may reproduce the results by inductively using the long exact sequences \eqref{vorquillen}.

\begin{prop}\label{HBQ} Modulo $\cS_7$ we have 
	\[H_m(BQ;\Z) = \begin{cases} 0 & \text{if $m=2,3,4,5,8,9$} \\
	\Z & \text{if $m=0,1,6,7,10,11$.}\end{cases}\]
Furthermore, modulo $\cS_{11}$ we have $H_{12}(BQ;\Z) = \Z$.
%% PEV: this result is from LS and $H_6(BQ)\iso \Z$.
\end{prop}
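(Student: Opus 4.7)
The plan is to use the spectral sequence derived from the long exact sequences~\eqref{vorquillen} of the filtration $BQ_0 \subset BQ_1 \subset \cdots \subset BQ$, with $E^1$-page
\[
E^1_{0,q} = H_q(\mathrm{pt};\Z), \qquad E^1_{p,q} = H_{p+q-1}(\mathrm{Vor}_{\GL_p(\Z)}) \quad \text{for } p \geq 1,
\]
differentials $d^r\colon E^r_{p,q}\to E^r_{p-r,q+r-1}$, converging to $H_{p+q}(BQ;\Z)$. Proposition~\ref{vorsteinberg} identifies $E^1_{p,q} \cong H_q(\GL_p(\Z);\St_p)$ modulo $\cS_{p+1}$; equivalently, one may inductively apply~\eqref{vorquillen} starting from $BQ_0 \simeq \ast$.

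The first task is to tabulate the nonzero $E^1_{p,q}$ modulo $\cS_7$ in the range $p+q\leq 12$. Using that $\cS_{N+1}=\cS_7$ for $7\leq N \leq 9$ and $\cS_2 \subset \cS_7$, the data from Table~\ref{VoronoiHomology} (for $N\leq 7$), Theorem~\ref{fact3} (for $N \in \{8,9,10\}$), and the vanishing of $H_q(\GL_N;\St_N)$ for $q > v(N) = N(N-1)/2$ leave exactly seven surviving entries: $E^1_{0,0}$, $E^1_{1,0}$, $E^1_{3,3}$, $E^1_{4,3}$, $E^1_{5,5}$, $E^1_{6,5}$ and $E^1_{6,6}$, each a copy of $\Z$ modulo $\cS_7$, contributing to $H_m(BQ;\Z)$ for $m = 0,1,6,7,10,11,12$ respectively.

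Next I would verify that all differentials in this range vanish. The four candidate horizontal $d^1$'s, $E^1_{1,0}\to E^1_{0,0}$, $E^1_{4,3}\to E^1_{3,3}$, $E^1_{6,5}\to E^1_{5,5}$ and $E^1_{7,6}\to E^1_{6,6}$, are each a map $\Z \to \Z$, hence nonzero if and only if nonzero rationally. Borel's theorem, via the rational equivalence $BQ \simeq_\Q K(\Q,1)\times K(\Q,6)\times K(\Q,10)\times\cdots$ coming from $\pi_n(BQ)_\Q = K_{n-1}(\Z)_\Q = \Q$ exactly for $n \in \{1,6,10,14,\ldots\}$, yields $H^*(BQ;\Q) = \Lambda(x_1)\otimes\Q[x_6]\otimes\Q[x_{10}]\otimes\cdots$, of rank one in each $m \in \{0,1,6,7,10,11,12\}$. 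Since this matches the $E^1$-count exactly, any nonzero $d^1$ would drop a rational rank in some such degree, a contradiction; all four $d^1$'s must vanish integrally. Higher differentials $d^r$ ($r\geq 2$) into or out of the surviving entries land on positions that already vanish modulo $\cS_7$ by Table~\ref{VoronoiHomology}, Theorem~\ref{fact3}, or the vcd bound; moreover, any $p \geq 11$ torsion in a potential source automatically maps to zero when the target is a torsion-free $\Z$.

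Finally, for convergence in the range $m\leq 11$, contributions from $BQ_N/BQ_{N-1}$ with $N \geq 11$ reduce essentially to the single free-part position $E^1_{11,0} = H_0(\GL_{11}(\Z);\St_{11})$, controlled via Borel--Serre duality and Corollary~\ref{cohomology}. The assertion for $H_{12}(BQ;\Z)$ modulo $\cS_{11}$ follows from the same argument carried out modulo the larger Serre class, which in particular absorbs any $11$-torsion contribution from $E^1_{10,q}$ entries. Once the differentials and convergence issues are dispatched, $E^\infty_{p,q} = E^1_{p,q}$ modulo the relevant Serre class in the range considered, yielding the stated values of $H_m(BQ;\Z)$. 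The main obstacle is ruling out the horizontal $d^1$'s: no direct geometric formula for these connecting maps is available, so one is compelled to argue via the rational comparison with Borel's theorem, where the equality of $E^1$-ranks and $H^*(BQ;\Q)$-ranks forces each candidate $d^1$ to vanish.
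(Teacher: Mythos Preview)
Your approach is essentially the same as the paper's: assemble the Quillen long exact sequences into a spectral sequence with $E^1_{p,q}=H_q(\GL_p(\Z);\St_p)$, read off the $E^1$-page from Table~\ref{VoronoiHomology} and Theorem~\ref{fact3}, and kill the potentially nonzero differentials by comparing rational ranks against Borel's computation of $H_*(BQ;\Q)$.

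Two small corrections worth noting. First, the genuine spectral sequence has $E^1_{p,q}=H_q(\GL_p(\Z);\St_p)$, not $H_{p+q-1}(\Vor_{\GL_p(\Z)})$; these agree only modulo $\cS_{p+1}$, and since that Serre class grows with $p$ it is cleaner (as the paper does) to work with the Steinberg-homology page and invoke Proposition~\ref{vorsteinberg} entry by entry. Second, your appeal to Corollary~\ref{cohomology} for the convergence issue at $(p,q)=(11,0)$ is misdirected: what actually makes $H_0(\GL_N(\Z);\St_N)$ vanish for $N\geq 3$ is the Lee--Szczarba result cited in Remark~\ref{rem:cp}, and to get the entry $(10,1)$ to vanish modulo $\cS_7$ (rather than merely $\cS_{11}$) one needs the Church--Putman input from the same remark. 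With those references in place your argument is complete and matches the paper's.
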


\begin{proof}The long exact sequences \eqref{vorquillen} give rise to a spectral sequence
	\[E^1_{pq} = H_q(\GL_p(\Z);\St_p) \Longrightarrow H_{p+q}(BQ;\Z).\]
	We shall use this modulo $\cS_{7}$ and $\cS_{11}$ respectively. Table \ref{VoronoiHomology} gives most of the $E^1$-page for $p \leq 11$ and $q \leq 12$, substituting $N=p$ and $k=q$. 
	
	For $m \leq 12$, the diagonal line $p+q=m$ contains either no non-zero entry, or a single entry $\Z$. Thus to prove the first part it suffices to show that the $d^1$-differentials $d^1 \colon E^1_{p+1,q} \to E^1_{p,q}$ vanish modulo $\cS_7$ for $p+q \leq 12$. For the second part, we need to further verify that all $d^r$-differentials $d^r \colon E^r_{6+r,6-r+1} \to E^r_{6,6}$ vanish modulo $\cS_{11}$. Since these are homomorphisms into abelian groups that are either zero or free, we can prove this by verifying it after tensoring with $\Q$.
	
	To do so, we use that the rational homotopy groups of $BQ$ are known by work of Borel, \S 12 of \cite{Borel}: 
	\[\pi_m(BQ) \otimes \Q = \begin{cases} \Q & \text{if $m=1$ or, $m = 4i+2$ with $i$ a positive integer,} \\
	0 & \text{otherwise.}\end{cases}\]
Since $BQ$ is an infinite loop space, the rational homotopy groups determines its rational homology groups: for $m \leq 12$, $H_m(BQ;\Q)$ is $0$ if $m=2,3,4,5,8,9$ and $\Q$ if $m = 0,1,6,7,10,11,12$. This proves the desired statement.
\end{proof}

\begin{rem}\label{rem:cp} That the coinvariants $H_0(\GL_N(\Z);\St_N)$ vanish for $N \geq 3$ is due to Lee and Szczarba, Theorem 1.3 of \cite{LShomology}. They deduce this by exhibiting a \emph{generating set} of $\St_N$. In \cite{CP}, Church and Putman give a \emph{presentation} of $\St_N$, from which one may deduce that $H_1(\GL_N(\Z);\St_N)=0$ modulo $\cS_N$ for $N \geq 3$. In fact, Theorem A of \cite{CP} gives only a rational statement, but it is straightforward to verify their argument goes through modulo $\cS_N$. In \cite{miller2018stability}, Miller, Patzt, and Nagpal have shown that $H_{1} (\SL_N(\Z) ; \St_N)= 0$ for $N \ge 6$, that is, without needing to work modulo $\cS_N$.\end{rem}

\subsection{On the Hurewicz homomorphism}
By definition, for every integer $m \geq 1$,
\[
K_{m} (\Z) = \pi_{m+1} (BQ) \, .
\]
The space $BQ$ is in fact $\Omega^{\infty-1} \mathbf{K}(\Z)$ with $\mathbf{K}(\Z)$ the algebraic $K$-theory spectrum of $\Z$, so in particular an infinite loop space. This has consequences for the kernel $C_m$ of the Hurewicz homomorphism
\[
h_m \colon \pi_m (BQ) \to H_m (BQ;\Z) \, .
\]

\begin{prop}\label{Hurewicz}
Modulo $\cS_5$, we have $C_m = 0$ for $m=9,10,11$. Modulo $\cS_{7}$, we have $C_{12}=0$.
\end{prop}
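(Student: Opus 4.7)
The plan is to exploit the infinite loop space structure: $BQ \simeq \Omega^\infty Y$ for $Y := \Sigma \mathbf{K}(\Z)$, a $1$-connective spectrum with $\pi_m Y = K_{m-1}(\Z)$. The space-level Hurewicz map, postcomposed with the natural map $H_m(BQ;\Z) \to H_m(Y;\Z)$ induced by the counit $\Sigma^\infty BQ \to Y$, equals the spectrum-level Hurewicz $\pi_m Y \to H_m(Y;\Z)$. Hence $C_m \subseteq \ker\bigl(\pi_m Y \to H_m(Y;\Z)\bigr)$, and it suffices to bound the spectrum-level kernel.

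Smashing $Y$ with the cofiber sequence of spectra $\tau_{\geq 1}S \to S \to H\Z$ and taking the long exact sequence in homotopy identifies
\[\ker\bigl(\pi_m Y \to H_m(Y;\Z)\bigr) = \mathrm{image}\bigl(\pi_m(Y \wedge \tau_{\geq 1}S) \to \pi_m Y\bigr),\]
so it remains to bound $\pi_m(Y \wedge \tau_{\geq 1}S)$. For this I would run the Atiyah--Hirzebruch spectral sequence associated to the Postnikov tower of $Y$:
\[E^1_{p,q} = H_q(\tau_{\geq 1}S;\pi_p Y) \Longrightarrow \pi_{p+q}(Y \wedge \tau_{\geq 1}S), \qquad p,q \geq 1.\]

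The numerical heart is the table of stable stems. For $1 \leq j \leq 10$ the groups $\pi_j^s$ involve only primes at most $5$ (explicitly, $\Z/2, \Z/2, \Z/24, 0, 0, \Z/2, \Z/240, (\Z/2)^2, (\Z/2)^3, \Z/6$), whereas $\pi_{11}^s = \Z/504$ introduces the prime $7$ for the first time. By a Serre class induction along the Postnikov tower of $\tau_{\geq 1}S$, this gives $H_q(\tau_{\geq 1}S;\Z) \in \cS_5$ for $q \leq 10$ and $H_{11}(\tau_{\geq 1}S;\Z) \in \cS_7$. Since each $\pi_p Y = K_{p-1}(\Z)$ is finitely generated (Quillen), the universal coefficient theorem then places every $E^1_{p,q}$ into $\cS_5$ when $p+q \leq 11$ and into $\cS_7$ when $p+q = 12$. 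As $\cS_5$ and $\cS_7$ are closed under extensions, the claim follows.

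The main obstacle is the Postnikov bookkeeping for the integral homology of $\tau_{\geq 1}S$: the appearance of $7$-torsion in $\pi_{11}^s$ is precisely what forces the weaker $\cS_7$-bound in degree $12$. A secondary subtlety is that $\pi_p Y$ for $p \geq 9$ involves the a priori unknown $K_{p-1}(\Z)$; however, these enter the spectral sequence only through tensor and $\mathrm{Tor}$ against finite groups whose order lies in the relevant Serre class, so only finite generation of $K_{p-1}(\Z)$ is needed to conclude.
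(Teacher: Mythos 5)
Your argument is correct, and it reaches exactly the ranges the paper needs: $\cS_5$ through $m=11$ and $\cS_7$ at $m=12$. It does, however, take a genuinely different route from the paper. The paper proves Proposition~\ref{Hurewicz} in one line by citing Arlettaz's theorem (Theorem~1.5 of~\cite{A}), which asserts that for a path-connected infinite loop space $X$ the kernel of $\pi_n(X) \to H_n(X;\Z)$ is annihilated by an integer $R_n$ supported on primes $\leq n/2 + 1$; plugging in $n = 9,10,11$ gives primes $\leq 5$ and $n=12$ gives primes $\leq 7$. You instead reprove the relevant special case from first principles: factor the space-level Hurewicz through the spectrum-level one for $Y = \Sigma\mathbf{K}(\Z)$, use the cofiber sequence $\tau_{\geq 1}\mathbf{S} \to \mathbf{S} \to H\Z$ to identify the spectrum-level kernel with the image of $\pi_m(Y \wedge \tau_{\geq 1}\mathbf{S})$, and then bound the latter via the Atiyah--Hirzebruch spectral sequence, the universal coefficient theorem (using Quillen's finite generation of $K_*(\Z)$), and the observation that $\pi_j^s$ is supported at primes $\leq 5$ for $1 \leq j \leq 10$ while $\pi_{11}^s = \Z/504$ first introduces the prime $7$. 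This buys transparency: it makes visible that the jump from $\cS_5$ to $\cS_7$ is forced precisely by the $\alpha_1$-family element in $\pi_{11}^s$, and it avoids appealing to a black box. The cost is length, and the argument is essentially a recomputation of what Arlettaz's theorem packages in general; the paper's one-line citation is the more economical choice. Two small cosmetic points: your spectral sequence is most naturally indexed with its $E^2$-page $E^2_{s,t} = H_s(\tau_{\geq 1}\mathbf{S};\pi_t Y)$ rather than an $E^1$-page with the roles of $p$ and $q$ swapped, and your conclusion in fact gives $C_m = 0$ modulo $\cS_5$ for all $m \leq 11$, slightly more than the statement requires.
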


\begin{proof}This follows from Theorem 1.5 of \cite{A}, which implies that if $X$ is a path-connected infinite loop space then the kernel of the Hurewicz homomorphism $\pi_n(X) \to H_n(X;\Z)$ is annihilated by $R_n$, an integer divisible only by primes $\leq \frac{n}{2}+1$. We apply this result to $X = BQ$.\end{proof}

%We can now apply the above results to K-theory.
\begin{thm}\label{K8Z}
 The group $K_8 (\Z)$ is trivial.
\end{thm}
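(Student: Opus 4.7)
The plan is to combine Propositions \ref{HBQ} and \ref{Hurewicz}, which restrict the primes that can divide $|K_8(\Z)|$, with classical computations of the $p$-primary $K$-theory of $\Z$ at small primes $p$. The broad structure is: first squeeze $K_8(\Z)$ into the Serre class $\cS_7$; then kill each primary component.

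First, I would show that $K_8(\Z) \in \cS_7$. Since $K_8(\Z) = \pi_9(BQ)$, the Hurewicz homomorphism $h_9 \colon K_8(\Z) \to H_9(BQ;\Z)$ fits into a short exact sequence
\[
0 \to C_9 \to K_8(\Z) \to \im(h_9) \to 0.
\]
Proposition \ref{Hurewicz} gives $C_9 \in \cS_5 \subset \cS_7$, while Proposition \ref{HBQ} gives $H_9(BQ;\Z) \in \cS_7$, hence also $\im(h_9) \in \cS_7$. Thus both outer terms lie in $\cS_7$, and therefore so does the middle. Combined with Borel's computation $K_{4k}(\Z) \otimes \Q = 0$ for $k \geq 1$, this shows $K_8(\Z)$ is a finite abelian group whose order involves only the primes $2, 3, 5, 7$, so that $K_8(\Z) = K_8(\Z)_{(2)} \oplus K_8(\Z)_{(3)} \oplus K_8(\Z)_{(5)} \oplus K_8(\Z)_{(7)}$.

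Second, I would dispatch each primary component by invoking existing results. At $p=2$, the work of Rognes--Weibel (building on Voevodsky's resolution of the Milnor conjecture) completely determines the $2$-primary $K$-theory of $\Z$; their tables give $K_8(\Z)_{(2)} = 0$. At each of $p \in \{3,5,7\}$, the prime is regular (all lie well below the first irregular prime $37$), and so satisfies the Kummer--Vandiver conjecture. By the theorem of Kurihara (equivalently Soulé, combined with the Quillen--Lichtenbaum conjecture, now a theorem of Voevodsky--Rost), Vandiver's conjecture at $p$ implies $K_{4k}(\Z)_{(p)} = 0$ for all $k \geq 1$, so in particular $K_8(\Z)_{(p)} = 0$. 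Assembling the four primary contributions yields $K_8(\Z) = 0$.

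The genuinely hard input is the Voronoi-complex computation underpinning Proposition \ref{HBQ}, which has already been carried out in Sections \ref{sec5}--\ref{sec6}; once that is in hand, the proof here is essentially a matter of bookkeeping. The only place one must be careful is in tracking the Serre classes: it is essential that the Voronoi side controls $H_9(BQ;\Z)$ modulo $\cS_7$, rather than some larger class, because the smallest odd prime at which $K_{4k}(\Z)$ is not independently known to vanish (via regularity) is $37$, and we must not leave any primes $\leq 7$ unaccounted for.
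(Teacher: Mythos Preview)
Your proposal is correct and follows essentially the same approach as the paper: combine Propositions~\ref{HBQ} and~\ref{Hurewicz} to force $K_8(\Z)\in\cS_7$, then invoke the (now proven) Quillen--Lichtenbaum conjectures to kill the torsion at each prime $\leq 7$. The paper's proof is terser---it cites Weibel's book and states only the consequence for regular odd primes---whereas you spell out the short exact sequence for the Hurewicz kernel and separately address the $2$-primary part via Rognes--Weibel, which is arguably a clarifying addition but not a different strategy.
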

\begin{proof} From Propositions \ref{HBQ} and \ref{Hurewicz} we deduce that $K_8(\Z)=0$ modulo $\cS_7$. According 
to the {\em Quillen--Lichtenbaum conjectures} (see e.g.~Chapter VI.10 of \cite{WeibelBook}) if $\ell$ is a regular odd prime, there are no $\ell$-torsion in $K_{2j}(\Z)$ for $j>0$. Hence $K_8(\Z)=0$.
\end{proof}

Using an elaboration of the method presented, we can recover information about several related algebraic K-theory groups (see also Table VI.10.1.1 of \cite{WeibelBook}).

\begin{prop}%% other K-groups but already known
Modulo $\cS_7$, $K_9(\Z)\iso\Z$ and $K_{10}(\Z)=0$. Modulo $\cS_{11}$, $K_{11}(\Z) = 0$.
\end{prop}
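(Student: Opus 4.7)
The plan is to parallel the proof of Theorem \ref{K8Z}: for each $m \in \{9,10,11\}$, identify $K_m(\Z) = \pi_{m+1}(BQ)$ and combine Proposition \ref{HBQ} (computing $H_{m+1}(BQ;\Z)$ modulo the appropriate Serre class), Proposition \ref{Hurewicz} (bounding the Hurewicz kernel $C_{m+1}$), and Borel's computation of $K_\ast(\Z) \otimes \Q$. Borel's theorem shows that $K_9(\Z)$ has rank one, whereas $K_{10}(\Z)$ and $K_{11}(\Z)$ are finite.

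First I would dispatch the torsion cases. By Proposition \ref{Hurewicz}, $C_{11} \in \cS_5 \subseteq \cS_7$ and $C_{12} \in \cS_7 \subseteq \cS_{11}$, so modulo these Serre classes the Hurewicz map embeds $K_{10}(\Z)$ into $H_{11}(BQ;\Z)$ and $K_{11}(\Z)$ into $H_{12}(BQ;\Z)$. Proposition \ref{HBQ} identifies both targets with $\Z$ (modulo $\cS_7$ and $\cS_{11}$ respectively). Since the image of a finite torsion group in $\Z$ is zero, we conclude $K_{10}(\Z) \in \cS_7$ and $K_{11}(\Z) \in \cS_{11}$, as required.

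For the rank-one case $K_9(\Z)$, write $K_9(\Z) = \Z \oplus T$ with $T$ the finite torsion subgroup, and fix a splitting $H_{10}(BQ;\Z) = \Z \oplus T_H$ with $T_H \in \cS_7$ (valid because $H_{10}(BQ;\Z)$ is finitely generated and isomorphic to $\Z$ modulo $\cS_7$ by Proposition \ref{HBQ}). The Hurewicz map restricted to $T$ sends a finite group into $\Z \oplus T_H$, hence lands in the torsion subgroup $T_H$. The induced homomorphism $\phi\colon T \to T_H$ has kernel contained in $C_{10} \in \cS_5$, so $T/\ker(\phi) \hookrightarrow T_H \in \cS_7$, and extension closure of $\cS_7$ yields $T \in \cS_7$. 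Therefore $K_9(\Z) \cong \Z$ modulo $\cS_7$.

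The main point to verify carefully is that $H_{10}(BQ;\Z)$ is finitely generated, so that the splitting $\Z \oplus T_H$ exists; this follows either from the finite generation of $\pi_\ast(BQ)$ (Quillen) combined with a Postnikov/Serre spectral sequence argument, or directly from the Quillen-type exact sequences \eqref{vorquillen} whose terms $H_\ast(\GL_N(\Z);\St_N)$ are finitely generated in each relevant degree. Apart from this algebraic bookkeeping, the proof is a direct analogue of that of Theorem \ref{K8Z}, with the only conceptual novelty being the observation that torsion cannot map nontrivially into the free summand of $H_{m+1}(BQ;\Z)$.
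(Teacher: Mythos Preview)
Your argument is correct, and it takes a genuinely different and more elementary route than the paper's own proof. The paper proceeds inductively: having established $K_i(\Z)$ modulo $\cS_7$ for $i\le 8$, it constructs a map of spectra $\mathbf{S}^0\vee\mathbf{S}^5\to\mathbf{K}(\Z)$ that is $8$-connected modulo $\cS_7$, applies the \emph{relative} Hurewicz theorem modulo $\cS_7$ to compute $\pi_{10}(BQ)$ as a relative homology group, then extends the map by a further $\mathbf{S}^9$ and repeats in degrees $11$ and $12$. Your approach instead uses only the \emph{absolute} Hurewicz map $h_{m+1}\colon\pi_{m+1}(BQ)\to H_{m+1}(BQ;\Z)$ together with Borel's rank computation, observing that a finite torsion group can only hit the torsion summand of the target. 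This avoids the spectrum-level constructions and the inductive dependence on Theorem~\ref{K8Z}; each of $K_9$, $K_{10}$, $K_{11}$ is handled independently. The paper's method is more constructive (it exhibits an explicit highly connected approximation to $\mathbf{K}(\Z)$, useful for pushing further), whereas yours is shorter and needs less machinery. The one point you flag---finite generation of $H_{10}(BQ;\Z)$---is indeed the only thing to check, and either of the justifications you sketch is adequate.
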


\begin{proof}Having proven Theorem \ref{K8Z}, we know the groups $K_i(\Z)$ modulo $\cS_7$ for $i \leq 8$: they vanish unless $i=0,5$ in which case they are $\Z$. The groups $K_i(\Z)$ are also the homotopy groups of the algebraic $K$-theory spectrum $\mathbf{K}(\Z)$, and we conclude that there is a map of spectra
	\[\mathbf{S}^0 \vee \mathbf{S}^1 \to \mathbf{K}(\Z)\]
which is  $8$-connected modulo $\cS_7$. By the Hurewicz theorem modulo $\cS_7$, we see that 
\[\pi_{10}(BQ) \cong H_{10}(BQ,\Omega^{\infty-1}(\mathbf{S}^0 \vee \mathbf{S}^5);\Z)\]
modulo $\cS_7$. It is a standard computation that $H_{10}(\Omega^{\infty-1}(\mathbf{S}^0 \vee \mathbf{S}^5);\Z) = 0$ modulo $\cS_7$, so from Propositions \ref{HBQ} and the long exact sequence of a pair it follows that $K_9(\Z) = \pi_{10}(BQ) \cong \Z$ modulo $\cS_7$.

This allows for the construction of a further map
	\[\mathbf{S}^0 \vee \mathbf{S}^5 \vee \mathbf{S}^9 \to \mathbf{K}(\Z)\]
which is $9$-connected modulo $\cS_7$. Applying $\Omega^{\infty-1}$ and repeating the above analysis in degrees $11$ and $12$ gives $K_{10}(\Z)=0$ modulo $\cS_7$ and $K_{11}(\Z) = 0$ modulo $\cS_{11}$.
\end{proof}

\begin{rem}As pointed out in Remark \ref{rem:cp}, \cite{CP} proves that $H_1(\GL_N(Z);\St_N)$ vanishes modulo $\cS_{N}$. In order to prove $K_{12}(\Z)=0$, we thus ''only`` need to recover the groups $H_{12}(\Vor_{\GL_N(\Z)})$ for $N=9,10,11$ which are still missing.
\end{rem}

\section{Arithmetic applications}\label{KV}\label{sec8}
For the convenience of the reader, we recall some facts about the relationship between algebraic K-theory and \'etale cohomology, with a view towards the Kummer--Vandiver conjecture. We follow the presentation of Kurihara \cite{Kurihara} and Soul\'e \cite{So2} (see also Section VI.10 of \cite{WeibelBook}). 

Let $p$ be an odd prime, $i\in\N$ and $j\in \Z$. Denote by
\[
 H^i_\et(\Z[1/p];\Z_p(j))\coloneqq \varprojlim_\nu H^i_\et(\Spec(\Z[1/p]);\Z/p^\nu(j))
\]
the \'etale cohomology groups of the scheme $\Spec(\Z[1/p])$ with coefficients in the $j$-th Tate twist
of the $p$-adic integers.
 It is known that when $j \neq 0$ these groups vanish unless $i=1,2$. 
It was shown by Dwyer and Friedlander \cite{DwyerFriedlander} and (independently) by Soul\'e \cite{So2}, that when $m=2j-i>1$ and $i=1,2$, there is a surjective Chern map
\[
 K_m(\Z) \to H^i_\et(\Z[1/p];\Z_p(j))\, .
\]
Recall the following facts about these groups:
\begin{enumerate}
 \item When $p>j+1$ the groups $H^1_\et(\Z[1/p];\Z_p(j))$ vanish.
 \item When $j>0$ is even, the order of $H^2_\et(\Z[1/p];\Z_p(j))$ is equal 
 to the numerator of $B_n/n$ (this is due to Mazur and Wiles, \cite{MazurWiles})
\end{enumerate}
Hence, those groups are not known when $i=2$ and $j$ is odd (assuming $p>j+1$). At the level of $m$ 
it means that {$m$ is divisible by 4}.
Let $\Q(\zeta_p)$ be the cyclotomic extension of $\Q$ obtained by adding $p$-th roots of unity.
Let $C$ be the $p$-Sylow subgroup of the class group of $\Q(\zeta_p)$.
The group $\Delta=\Gal(\Q(\zeta_p)/\Q)\iso \U{(\Z/p)}$ acts upon $C$ via the Teichm\"uller character
\[ \omega \colon \Delta \to \U{(\Z/p)}\, ,\]
with $g(x)=x^{\omega(g)}$ and $x^p=1$.
For all $i\in \Z$ let
\[ C^{(i)} = \{ x \in C \quad\text{such that}\quad g(x)=\omega(g)^i x \quad \text{for all}\quad g\in \Delta\}\, . \] 
Let  $C^+$ be the subgroup of $C$ fixed by the complex conjugation of $\Q(\zeta_p)$. The Kummer-Vandiver conjecture states that $C^+=0$ for arbitrary $p$. By the above construction, it turns out that $C^+$
is the direct sum of the groups $C^{(i)}$ for $i$ even and $0\leq i \leq p-3$. We then deduce the reformulation 
of the Kummer--Vandiver conjecture \cite{Kurihara,So2}:

\begin{conj}[Kummer--Vandiver conjecture] The groups $C^{(i)}$ vanish for $i$ even and $0\leq i \leq p-3$.\end{conj}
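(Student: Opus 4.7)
The plan is to approach this famously open conjecture through the bridge to algebraic $K$-theory laid out in the preceding discussion. By the surjective Chern character $K_{2j-2}(\Z) \to H^2_\et(\Z[1/p];\Z_p(j))$ for $j$ odd and $p>j+1$, vanishing of the $p$-torsion in $K_{2j-2}(\Z)$ would force $H^2_\et(\Z[1/p];\Z_p(j))=0$, and the work of Kurihara and Soulé ties this étale cohomology group to a component $C^{(p-j)}$ of the $p$-part of the cyclotomic class group. Since the remaining open cases of the conjecture concern $i$ even with $0\le i\le p-3$, substituting $i=p-j$ (so $j$ odd, and $m=2j-2$ is divisible by $4$) covers exactly the cases not already settled by Mazur--Wiles. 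The strategy therefore is to prove that $K_{4k}(\Z)$ has no $p$-torsion for all relevant pairs $(k,p)$.

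First I would push the Voronoi-complex enumerations of \S\ref{sec4} to higher ranks $N$ and higher cellular dimensions, targeting further vanishing of $H_m(\GL_N(\Z);\St_N)$ beyond what Corollary \ref{homology_GL_steinberg} already provides. Combined with Borel--Serre duality and the Quillen long exact sequences \eqref{vorquillen}, this should yield vanishing of $H_*(BQ;\Z)$ in higher degrees modulo successively larger Serre classes $\cS_n$. Then I would iterate the bootstrap of the last proposition, refining the map $\mathbf{S}^0\vee\mathbf{S}^5\vee\mathbf{S}^9\vee\cdots\to\mathbf{K}(\Z)$ to show that, in each degree $4k$, the homotopy group $K_{4k}(\Z)$ is detected purely by the rational summands predicted by Borel, with no $p$-primary surprises.

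The hard part will be twofold. The combinatorial obstacle is that the Voronoi complex grows at least like $e^{O(d^2\log d)}$ by van Woerden's bound, so direct enumeration stalls well before one can handle the ranges of $N$ needed to control $K_{4k}(\Z)$ for all $p$ and all $k$; some uniform structural input, perhaps via homological stability for $\St_N$ and generic-representation-stability in the sense of Church--Farb--Putman, would be required to replace case-by-case calculation with a statement uniform in $N$. The arithmetic obstacle is more fundamental: even granting complete information on $H_*(\GL_N(\Z);\St_N)$, ruling out $p$-torsion in $K_{4k}(\Z)$ for all $p$ and $k$ is tantamount to the conjecture itself, since the Iwasawa-theoretic descent relating $K$-theory to the plus-part of the class group (the Quillen--Lichtenbaum framework of \cite{WeibelBook}) is an equivalence, not an implication. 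Any honest assault must therefore marry the topological enumeration techniques of this paper with new $p$-adic $L$-function input controlling the even Tate twists, and it is precisely the latter ingredient that currently has no known source.
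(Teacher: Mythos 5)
The statement you were handed is a genuine open conjecture; the paper does not prove it. What the paper actually establishes is, first, the equivalence with the classical formulation $C^+ = 0$, via the eigenspace decomposition $C^+ = \bigoplus_{i\ \mathrm{even}} C^{(i)}$ under the semisimple action of $\Delta \cong (\Z/p)^\times$, and second, the single special case $C^{(p-5)} = 0$ for all primes $p > 3$, deduced from the surjection $K_{2m-2}(\Z) \twoheadrightarrow C^{(p-m)}$ applied with $m = 5$ together with the vanishing of $K_8(\Z)$. Your proposal correctly traces the entire bridge---Voronoi homology, Borel--Serre duality, Quillen's exact sequences, the Chern maps to \'etale cohomology, and Quillen--Lichtenbaum---down to the eigenspaces $C^{(p-j)}$, and you correctly identify both obstacles to pushing further: the combinatorial blow-up of the Voronoi complex (van Woerden's $e^{O(d^2\log d)}$ bound makes enumeration hopeless beyond a few more ranks), and the deeper arithmetic circularity that, with Quillen--Lichtenbaum now a theorem, ruling out $p$-torsion in $K_{4k}(\Z)$ for all $p$ and all $k$ is not an independent route to Kummer--Vandiver but essentially a restatement of it. There is no gap in your reasoning because there is no proof to give; the only thing missing, if the goal were to reproduce what the paper in fact contributes rather than settle the conjecture, is the explicit specialization $m=5$, $K_8(\Z) = 0$, hence $C^{(p-5)} = 0$.
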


From \emph{op. cit.}, using the above setting, we get a surjective map
\[
 K_{2m-2}(\Z) \twoheadrightarrow C^{(p-m)}\, .
\]
As consequence of (\ref{K8Z}), we get
\begin{cor} The groups  $C^{(p-5)}$ are zero for all prime $p>3$.
\end{cor}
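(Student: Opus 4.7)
The statement to prove is essentially a direct corollary of Theorem \ref{K8Z}, so the proof plan is short. The plan is to take the surjective map
\[
K_{2m-2}(\Z) \twoheadrightarrow C^{(p-m)}
\]
recalled just before the corollary (valid whenever the various hypotheses on $p$, $i=2$, and $j=m$ from the Dwyer--Friedlander--Soul\'e Chern class construction are satisfied) and specialize it to $m = 5$. This yields a surjection
\[
K_8(\Z) \twoheadrightarrow C^{(p-5)}.
\]

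Next, I would invoke Theorem \ref{K8Z}, which asserts $K_8(\Z) = 0$. Since the target of a surjection from the trivial group is trivial, this immediately gives $C^{(p-5)} = 0$.

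The only thing worth double-checking is the range of primes for which the specialized surjection makes sense. The surjectivity statement quoted requires $m = 2j - i > 1$ with $(i,j) = (2,5)$, which is fine, and the vanishing of $H^1_{\et}(\Z[1/p]; \Z_p(j))$ used in identifying the $K$-theory target with $H^2_{\et}(\Z[1/p]; \Z_p(j))$ holds as soon as $p > j+1 = 6$; also one needs $p-5$ to lie in the relevant range of Teichm\"uller weights, which forces $p\geq 5$. The statement ``$p>3$'' in the corollary is therefore understood modulo these standard hypotheses built into the Chern class surjection; no further obstacle arises, and there are no serious computational difficulties since all the real work has been absorbed into Theorem \ref{K8Z}.
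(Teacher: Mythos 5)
Your proof is correct and follows exactly the paper's route: the corollary is stated in the paper as an immediate consequence of the surjection $K_{2m-2}(\Z)\twoheadrightarrow C^{(p-m)}$ specialized at $m=5$ together with Theorem \ref{K8Z}. Your extra care about the range of $p$ required for the Chern class surjection and the vanishing of $H^1_{\et}$ is a sensible addition that the paper leaves implicit.
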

\begin{rem} By \cite{So2}, we know that $C^{(p-n)}$ is zero for $p$ ``large enough'' with respect to $n$.
From computations done by Buhler and  Harvey \cite{BH11}, we know that the conjecture is true for all (irregular) primes $p<163 577 856$. This was recently improved to $p<2 147 483 648$ by Hart, Harvey, and Ong \cite{HartHarveyOng}.
\end{rem}

\bibliographystyle{plain}
\bibliography{References}

\end{document}